\definecolor{mygreen}{RGB}{28,172,0} % color values Red, Green, Blue
\definecolor{mylilas}{RGB}{170,55,241}
\newtheorem{theorem}{Theorem}[section]
\newtheorem{remark}{Remark}[section]
\newtheorem{lemma}[theorem]{Lemma}
\newtheorem{definition}[theorem]{Definition}
\newtheorem{assumption}[theorem]{Assumption}
\newtheorem{example}[theorem]{Example}
\newcommand{\bb}{\mathbf{b}}
\newcommand{\ff}{\mathbf{f}}
\newcommand{\bgg}{\mathbf{g}}
\newcommand{\pp}{\mathbf{p}}
\newcommand{\rr}{\mathbf{r}}
\newcommand{\uu}{\mathbf{u}}
\newcommand{\vv}{\mathbf{v}}
\newcommand{\ww}{\mathbf{w}}
\newcommand{\xx}{\mathbf{x}}
\newcommand{\yy}{\mathbf{y}}
\newcommand{\ttt}{{\eta}}
\newcommand{\aas}{\mathbf{A}_{\Omega_s}}
\newcommand{\schur}{{ (1+\ttt)^2 / C_2^2}}
\newcommand{\chen}[1]{\textcolor{black}{#1}}
\newcommand{\chennew}[1]{\textcolor{black}{#1}}
\newcommand{\chennewnew}[1]{\textcolor{black}{#1}}
\title{GMRES Convergence Analysis for Nonsymmetric Saddle-Point Systems 
When the Field of Values Contains the Origin}
\author{
  \textsc{Hao Chen}\footnotemark[2]%
  \and
  \textsc{Chen Greif}\footnotemark[2]\hspace{4pt}\footnotemark[1]%
}
\date{
  \em Dedicated to Daniel Szyld on the occasion of his 70th birthday \\[6pt]
  To be published in Linear Algebra and Its Applications
}
\begin{document}
\maketitle

% Use symbol footnotes just for these two entries
\begingroup
\renewcommand{\thefootnote}{\fnsymbol{footnote}} % 1=*, 2=\dagger, 3=\ddagger, ...
\footnotetext[2]{Department of Computer Science, The University of British Columbia, Vancouver, Canada. Email: \{haochen3,greif\}@cs.ubc.ca.}
\footnotetext[1]{The work of this author was supported in part by the Natural Sciences and Engineering Research Council of Canada. }
\endgroup

\maketitle

\begin{abstract}
\noindent We present a field-of-values (FOV) analysis for preconditioned nonsymmetric saddle-point linear systems, where zero is included in the field of values of the matrix. We rely on recent results of Crouzeix and Greenbaum [Spectral sets: numerical range and beyond. SIAM Journal on Matrix Analysis and Applications, 40(3):1087-\chen{1101}, 2019], showing that a convex region with a circular hole is a spectral set. Sufficient conditions are derived for convergence independent of the matrix dimensions. We apply our results to preconditioned nonsymmetric saddle-point systems, and show their applicability to families of block preconditioners that have not been previously covered by existing FOV analysis. A limitation of our theory is that the preconditioned matrix is required to have a small skew-symmetric part in norm. Consequently, our analysis may not be applicable, for example, to fluid flow problems characterized by a small viscosity coefficient. Some numerical results illustrate our findings.
\end{abstract}

{\bf Keywords.} field of values; nonsymmetric saddle-point systems; GMRES convergence; block preconditioner

\section{Introduction}
The field of values of a matrix is an indispensable tool in linear algebra and its applications.  It is defined as follows.
\begin{definition}
  \label{def:fov}
  Given a matrix $A \in \mathbb{C}^{n \times n}$, the field of values (FOV)  of  $A$ is defined as
  $$
  W(A) = \left\{ \frac{\xx^* A \xx}{\xx^* \xx} : \quad \xx \in \mathbb{C}^n \chennew{\backslash \{\mathbf 0\}}  \right\}
  $$
  and the $H$-field of values  of  $A$, given a \chen{Hermitian positive definite} matrix $H \in \mathbb{C}^{n \times n}$, is defined as
  $$
  W_H(A) = \left\{ \frac{\xx^* H A \xx}{\xx^* H \xx} : \quad \xx \in \mathbb{C}^n \chennew{\backslash \{\mathbf 0\}} \right\}.
  $$
\end{definition}

Early work on the topic was published in \cite{eiermann1993fields,klawonn1999block} and in several other papers; see \cite{benzi2021some} for a recent expository paper that provides an overview of the use of \chennew{the} FOV, its history and development, and a comprehensive list of references. 

Throughout this paper, we extensively use the notion of a weighted norm, which we define as follows.
\begin{definition} 
  Given a Hermitian positive definite  matrix $H\in \mathbb{C}^{n\times n}$, the $H$-norm of a vector $\uu\in \mathbb{C}^n$ is defined as 
\begin{equation*}
\| \uu\|_{H} = (\uu,H \uu)^{1/2},
\end{equation*}  
\chennewnew{
and the corresponding $H$-norm of a matrix $A  \in \mathbb{C}^{n \times n}$ is the induced norm 
\begin{equation*}
\| A\|_{H} = \max_{ \uu \in \mathbb{C}^n \chennewnew{\backslash \{\mathbf 0\}} } \frac{\| A \uu \|_H}{\| \uu \|_H}.
\end{equation*} 
If $A$ is nonsingular, its  $H$-condition number  is defined as
\[ 
\kappa_H(A) = \|A\|_H \|A^{-1}\|_H.
\]
 }
\end{definition}

In the context of this work, we are interested in the use of FOV to establish the scalability of Krylov subspace iterative solvers (specifically, GMRES \cite{saad1986gmres}) for large and sparse nonsymmetric saddle-point systems:
\begin{equation}
\label{eq:K_sys}
 \begin{bmatrix}
    F & B^T \\
    B & 0
  \end{bmatrix}
  \begin{bmatrix}
      \uu \\ \pp
  \end{bmatrix} = 
  \begin{bmatrix}
      \ff \\ \bgg
  \end{bmatrix} ,
\end{equation}
where $F \in \mathbb{R}^{n \times n}$ is nonsymmetric, $B \in \mathbb{R}^{m \times n}$ has full row rank, and $\uu, \ff \in {\mathbb R}^n, \ \pp, \bgg \in {\mathbb R}^m.$

Significant work has been done on this topic 
\cite{beik2022preconditioning, chidyagwai2016constraint,  klawonn1999block, loghin2004analysis, ma2016robust}, but to the best of our knowledge, the analysis is limited to the situation where $0$ is not included in the field of values. 
Our goal is to perform an FOV analysis for preconditioned saddle-point systems in the case where the origin is included.

Part of our motivation in considering the field of values is that spectral analysis may be limited for this family of linear systems: for nonsymmetric saddle-point systems arising from partial differential equations, the condition number of the eigenvector matrix of the preconditioned matrix typically increases with the matrix dimensions. \chennewnew{Let $\mathbf{P}_j$ denote all polynomials $p$ of degree $\le j$ that satisfy $p(0)=1$.}
\chen{Then, considering solving \eqref{eq:K_sys} using GMRES in the $H$\chennewnew{-}norm, if $\rr_k$ denotes the residual of the $k$th iteration,
the inequality
    \begin{equation}
  \frac{\|\rr_j\|_{H}}{\|\rr_0\|_{H}}
  \;\le\;
  \kappa_{H}(V)\, \min_{p\in \mathbf{P}_j} \max_{k=1,\dots,n+m} \bigl|p(\lambda_k)\bigr|,
  \label{eq:spec}
\end{equation}
where $V$ is the best \chennewnew{$H$-conditioned} matrix of eigenvectors of the saddle-point matrix and $\{ \lambda_k\}$ are its $n+m$ eigenvalues,  may not capture the possibility of the iteration counts being independent or nearly independent of the matrix dimensions.
In the context of the Navier-Stokes equations, for example, this happens even for a large viscosity coefficient (see Remark \ref{rem:smallskew}). Thus, an analysis of the eigenvalues of the preconditioned matrix is often insufficient to theoretically prove scalability (in situations where it is expected) of an iterative method for such linear systems. }

Following the terminology of \cite[Eq. (1)]{crouzeix2019spectral},
while restricting our attention to discrete linear operators, polynomials, and the $H$-norm,  we say that for a closed subset $X \subset {\mathbb C}$ \chen{and} a matrix $A$,  $X$ is a $K$-spectral set for $A$ if for any polynomial $p$
$$ \|p(A)\|_H \leq K \sup_{z \in X} |p(z)|.$$

\begin{theorem}\cite[Theorem 6]{crouzeix2017numerical}
  \label{thm:fov_std}
  Let $A$ be a matrix of the same dimensions as $H$. Then, $W_H(A)$ is a $(1+\sqrt{2})$-spectral set for $A$.
\end{theorem}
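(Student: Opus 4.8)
The plan is to reduce the statement to the theorem of Crouzeix and Palencia, which asserts that for \emph{any} square matrix $\tilde A$ and any polynomial $p$ one has $\|p(\tilde A)\|_2 \le (1+\sqrt 2)\,\sup_{z\in W(\tilde A)}|p(z)|$; equivalently, the numerical range $W(\tilde A)$ is a $(1+\sqrt 2)$-spectral set for $\tilde A$ in the Euclidean norm. The reduction is effected by a congruence transformation that converts the $H$-inner product into the standard one, and I will simply invoke the Crouzeix--Palencia bound as a black box.

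Since $H$ is Hermitian positive definite, write $H = LL^*$ with $L$ invertible (for instance the Cholesky factor, or $L = H^{1/2}$), and set $\tilde A = L^* A L^{-*}$. First I would record a norm identity: for any matrix $M$, using $\|u\|_H = \|L^* u\|_2$,
\[
\|M\|_H \;=\; \sup_{u\ne 0}\frac{\|Mu\|_H}{\|u\|_H} \;=\; \sup_{u\ne 0}\frac{\|L^* M u\|_2}{\|L^* u\|_2} \;=\; \sup_{v\ne 0}\frac{\|L^* M L^{-*}v\|_2}{\|v\|_2} \;=\; \|L^* M L^{-*}\|_2,
\]
where we substituted $v = L^* u$, which ranges over all nonzero vectors as $u$ does. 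Applying this with $M = p(A)$ and using that the polynomial functional calculus commutes with similarity, $L^* p(A) L^{-*} = p(L^* A L^{-*}) = p(\tilde A)$, yields $\|p(A)\|_H = \|p(\tilde A)\|_2$.

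Next I would check that the $H$-field of values of $A$ coincides with the ordinary field of values of $\tilde A$. For $x \ne 0$,
\[
\frac{x^* H A x}{x^* H x} \;=\; \frac{x^* L L^* A x}{x^* L L^* x} \;=\; \frac{(L^* x)^*\,(L^* A L^{-*})\,(L^* x)}{(L^* x)^*(L^* x)},
\]
and since $y = L^* x$ sweeps out all of $\mathbb{C}^n\setminus\{0\}$, we get $W_H(A) = W(\tilde A)$. Combining the two observations with the Crouzeix--Palencia bound for $\tilde A$ gives, for every polynomial $p$,
\[
\|p(A)\|_H \;=\; \|p(\tilde A)\|_2 \;\le\; (1+\sqrt 2)\,\sup_{z\in W(\tilde A)}|p(z)| \;=\; (1+\sqrt 2)\,\sup_{z\in W_H(A)}|p(z)|,
\]
which is exactly the assertion that $W_H(A)$ is a $(1+\sqrt 2)$-spectral set for $A$ in the $H$-norm.

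As for the main obstacle: the analytic substance of the theorem — that the numerical range is a $(1+\sqrt 2)$-spectral set — is the deep Crouzeix--Palencia result, which I would not attempt to reprove. The only genuine care in the argument above is the bookkeeping of the congruence: choosing $L$ so that $H = LL^*$ (rather than $L^*L$), so that the $H$-inner product becomes $\langle L^*\cdot,\,L^*\cdot\rangle$ and the transformed matrix is $L^*AL^{-*}$; and confirming that the substitution $v = L^*x$ is a bijection of $\mathbb{C}^n$, so that neither the operator norm nor the field of values loses any part of its range. The remaining ingredient — that similarity transformations commute with polynomial evaluation — is elementary.
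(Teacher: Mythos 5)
Your reduction is correct: the paper states this theorem as a cited result (from Crouzeix and Palencia) without providing a proof, and the similarity transformation $\tilde A = L^* A L^{-*}$ with $H = LL^*$ is the standard, routine way to transport the Euclidean-norm Crouzeix--Palencia bound to the $H$-weighted setting, since both the operator norm identity $\|p(A)\|_H = \|p(\tilde A)\|_2$ and the field-of-values identity $W_H(A) = W(\tilde A)$ follow immediately from the change of variables $y = L^* x$. Your bookkeeping (choosing $H = LL^*$ so that $\|u\|_H = \|L^* u\|_2$, and noting that $v = L^* x$ is a bijection) is exactly the care required, and nothing is missing.
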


In the sequel, we will be using GMRES with respect to $H$-norm, or equivalently  the $H$-weighted inner product $\langle \cdot,\cdot \rangle_H$. 
Applying Theorem \ref{thm:fov_std}, we obtain the following convergence bound.
\begin{theorem}\cite{crouzeix2019spectral} \label{thm:pol}
  Let $\rr_k=\bb-A \xx_k$ be the residual of the $k$-th iteration, $\xx_k$, of GMRES with respect to the $H$-norm applied to the linear system $A \xx = \bb$,  %of the residual, 
  and let $\mathbf{P}_j$ denote all polynomials $p$ of degree $\le j$ that satisfy $p(0)=1$. Then,
  \begin{equation*}
    \frac{\| \rr_j\|_H} {\| \rr_0\|_H} \leq \min_{p \in \mathbf{P}_j} \| p(A) \|_H \leq (1+\sqrt{2}) \min_{p\in \mathbf{P}_j} \max_{z \in W_H(A)}| p(z) |.
  \end{equation*}
\end{theorem}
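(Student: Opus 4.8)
The plan is to derive the bound in two independent pieces and then chain them together. First I would establish the left-hand inequality, $\|r_j\|_H/\|r_0\|_H \le \min_{p \in \mathbf{P}_j}\|p(A)\|_H$, using the standard optimality characterization of GMRES. Recall that GMRES in the $H$-inner product chooses $x_j \in x_0 + \mathcal{K}_j(A, r_0)$ so that $\|r_j\|_H = \|b - Ax_j\|_H$ is minimized over that affine Krylov space. Writing $x_j = x_0 + q_{j-1}(A)r_0$ for some polynomial $q_{j-1}$ of degree $\le j-1$, we get $r_j = r_0 - Aq_{j-1}(A)r_0 = p(A)r_0$ where $p(z) = 1 - zq_{j-1}(z) \in \mathbf{P}_j$; conversely every $p \in \mathbf{P}_j$ arises this way. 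Hence $\|r_j\|_H = \min_{p\in\mathbf{P}_j}\|p(A)r_0\|_H \le \left(\min_{p\in\mathbf{P}_j}\|p(A)\|_H\right)\|r_0\|_H$, which is exactly the first inequality. (This is where the "with respect to the $H$-norm" hypothesis is used crucially.)

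Next I would handle the right-hand inequality, $\min_{p\in\mathbf{P}_j}\|p(A)\|_H \le (1+\sqrt{2})\min_{p\in\mathbf{P}_j}\max_{z\in W_H(A)}|p(z)|$. This follows essentially immediately from Theorem~\ref{thm:fov_std}: for \emph{any} fixed polynomial $p$ we have $\|p(A)\|_H \le (1+\sqrt 2)\sup_{z\in W_H(A)}|p(z)|$ since $W_H(A)$ is a $(1+\sqrt 2)$-spectral set for $A$ in the $H$-norm. Restricting to $p \in \mathbf{P}_j$ and taking the minimum over that class on both sides preserves the inequality, giving the claim. One should note $W_H(A)$ is compact (it is the continuous image of the unit sphere, or one can work with its closure), so the supremum is attained and finite, and the minima over $\mathbf{P}_j$ are over a finite-dimensional compact-in-coefficients set so they too are attained; these are routine compactness remarks rather than obstacles.

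Finally I would simply concatenate the two displayed inequalities to obtain the stated chain. The main subtlety — though not really an obstacle — is being careful that the GMRES optimality and the spectral-set bound are both stated with respect to the \emph{same} weighted norm $\|\cdot\|_H$, so that the two pieces compose without any mismatch of norms; the theorem statement is set up precisely so that this alignment holds. I do not anticipate any genuinely hard step: the first inequality is the classical GMRES residual-polynomial argument transported verbatim to the $H$-inner product, and the second is a one-line application of Theorem~\ref{thm:fov_std}. The only thing to double-check is that $\mathbf{P}_j$ (polynomials of degree $\le j$ with $p(0)=1$) is exactly the set of residual polynomials available to GMRES at step $j$, including the edge behavior when the Krylov space has already become $A$-invariant (in which case the minimum is zero and the bound is trivial).
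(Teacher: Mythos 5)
Your argument is correct and is precisely the route the paper takes: the first inequality is the standard GMRES optimality characterization transported to the $H$-inner product, and the second is a direct application of Theorem~\ref{thm:fov_std} (the $(1+\sqrt{2})$-spectral-set property of $W_H(A)$). The paper simply states the result as an immediate consequence of Theorem~\ref{thm:fov_std}, so your fuller writeup matches it in substance.
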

A challenge is that when $0\in W_H(A)$, we have $\min_{p\in \mathbf{P}_j} \max_{z \in W_H(A)}| p(z) | = 1$, and Theorem \ref{thm:pol}  fails to provide a useful bound on GMRES convergence in this case. The presence of a zero in the field of values is, in fact, common in saddle-point systems: the (2,2)-block of a saddle-point system preconditioned with a block-diagonal matrix can be $0$. 
Recently, Crouzeix and Greenbaum \cite{crouzeix2019spectral} \chennew{defined} a convex region with a circular hole and showed that it is a spectral set. This can be  used to analyze cases when zero is included in the field of values. 

\chen{In \cite{crouzeix2019spectral} it is shown that if $\Omega_{CG}$ represents a domain constructed as $W_H(A)$  with a \chennew{ disk about the origin} removed  that has radius $1/w$, where $w$ denotes the numerical radius of $A^{-1}$, then $\Omega_{CG}$ is a $(3+2\sqrt{3})$-spectral set for $A$.  This bound can be improved to $2 + \sqrt{7}$ if a 
\chennew{smaller disk about the origin} of radius $1/\|A^{-1}\|$ is removed from $W_H(A)$; see \cite[Theorem 2]{crouzeix2019spectral} and  \cite{GreenbaumWellen2024}, which further refines the discussion of available bounds for a few cases of interest. } 
\begin{theorem} 
  \cite{crouzeix2019spectral,  GreenbaumWellen2024}
  \label{thm:fieldofvalues}
  Let $A$ be a matrix of the same dimensions as $H$. Then, $\Omega_{CG} = W_H(A) \cap \{z\in C: |z| \geq \| A^{-1}\|_{H}^{-1}\}$ is a \chen{$(2+\sqrt{7})$-spectral set for $A$}.
\end{theorem}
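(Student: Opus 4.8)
\emph{Sketch of a proof.} The plan is to combine two classical spectral-set facts---Crouzeix's bound for a convex region and von Neumann's inequality for a disk---by splitting the test function according to the geometry of $\Omega_{CG}$. First I would reduce to bounding $\|f(A)\|_H$ for rational functions $f$ whose poles lie off $\Omega_{CG}$; polynomials are the special case with no finite poles (and for them $\sup_{\Omega_{CG}}|p|=\sup_{\partial\Omega_{CG}}|p|$ by the maximum principle). A preliminary observation is that $\sigma(A)\subseteq\Omega_{CG}$: every eigenvalue $\lambda$ of $A$ lies in $W_H(A)$, and $|\lambda|^{-1}\le\rho(A^{-1})\le\|A^{-1}\|_H$ forces $|\lambda|\ge\|A^{-1}\|_H^{-1}$. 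Hence $f(A)$ is well defined by the holomorphic functional calculus.

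Two ingredients are assembled next. Since $W_H(A)$ is convex, Theorem~\ref{thm:fov_std} (in its standard form for functions analytic on a neighbourhood of a convex set) gives $\|g(A)\|_H\le(1+\sqrt{2})\sup_{W_H(A)}|g|$ whenever $g$ is analytic near $W_H(A)$. Second, set $r=\|A^{-1}\|_H^{-1}$, so that $rA^{-1}$ has $H$-norm equal to $1$; von Neumann's inequality in the Hilbert space $(\mathbb{C}^n,\langle\cdot,\cdot\rangle_H)$, composed with the map $z\mapsto r/z$, then yields $\|h(A)\|_H\le\sup_{|z|\ge r}|h(z)|=\sup_{|z|=r}|h(z)|$ for every $h$ analytic on $\{z:|z|\ge r\}\cup\{\infty\}$. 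Thus the closed exterior of the disk of radius $r$ is, in the appropriate sense, a $1$-spectral set for $A$.

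The core step is the splitting $f=f_1+f_2$, where $f_2$ collects the principal parts of $f$ at its poles inside $\{|z|<r\}$ together with a free rational correction $c$ whose poles lie in $\{|z|<r\}\setminus W_H(A)$ (points that are admissible, being in the bounded complementary component of $\Omega_{CG}$). Then $f_1=f-f_2$ is analytic on a neighbourhood of $W_H(A)$, while $f_2$ is analytic on $\{|z|\ge r\}\cup\{\infty\}$, so the two ingredients give
\begin{equation*}
  \|f(A)\|_H\;\le\;(1+\sqrt{2})\,\sup_{W_H(A)}|f_1|\;+\;\sup_{|z|=r}|f_2|.
\end{equation*}
It remains to choose the correction $c$---e.g.\ via a Cauchy-type integral of $f$ over the arc $\partial W_H(A)\cap\{|z|<r\}$, or as a best rational correction---so that both $\sup_{W_H(A)}|f_1|$ and $\sup_{|z|=r}|f_2|$ are dominated by a constant multiple of $\sup_{\Omega_{CG}}|f|$, with the constants independent of $A$ and of the dimension $n$.

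This last estimate is the heart of the matter and the step I expect to be hardest. One must show that the correction map $f\mapsto f_2$ is uniformly bounded, exploiting the two geometric facts that the set removed from $W_H(A)$ lies inside a disk and that the relevant part of that disk's boundary, $\{|z|=r\}\cap W_H(A)$, is contained in $\Omega_{CG}$---where $f$ is already under control. Balancing the two terms above (one carrying the factor $1+\sqrt{2}$, the other the factor $1$) and optimizing the parameters in the construction of $c$ is what upgrades ``some finite constant'' to the explicit value $3+2\sqrt{3}$. A final technical point is the degenerate configuration in which $0$ lies on $\partial W_H(A)$, so that the removed region is a ``lens'' rather than a full disk and $\Omega_{CG}$ need not be doubly connected; this is handled by a limiting argument, shrinking $r$ slightly (or enlarging $W_H(A)$ slightly) and passing to the limit.
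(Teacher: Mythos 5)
The paper does not prove this theorem; it is stated as a direct citation of Crouzeix and Greenbaum~\cite{crouzeix2019spectral}, so there is no in-paper argument against which to compare your proposal. With that caveat, your sketch is a reasonable reconstruction of the strategy underlying the cited result: decompose a function $f$ analytic on a neighbourhood of $\Omega_{CG}$ into $f_1+f_2$, with $f_1$ analytic on the convex set $W_H(A)$ --- controlled by the Crouzeix--Palencia $(1+\sqrt{2})$ bound of Theorem~\ref{thm:fov_std} --- and $f_2$ analytic on $\{|z|\ge r\}\cup\{\infty\}$, $r=\|A^{-1}\|_H^{-1}$ --- controlled by von Neumann's inequality applied to the $H$-contraction $rA^{-1}$. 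Your preliminary observation $\sigma(A)\subset\Omega_{CG}$ is correct and is what makes the holomorphic functional calculus legitimate.

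The gap is exactly where you place it, and it is not a technicality but the substance of the theorem. One needs a \emph{uniform} bound on the decomposition map: a choice of the free correction $c$ such that $\sup_{W_H(A)}|f_1|$ and $\sup_{|z|=r}|f_2|$ are each dominated by a dimension- and geometry-free constant times $\sup_{\Omega_{CG}}|f|$, with the two constants combining to give $3+2\sqrt{3}$ after optimization. Without that estimate the argument only yields that $\Omega_{CG}$ is a $K$-spectral set for \emph{some} $K$, and in a na\"{\i}ve Cauchy-integral split $K$ can blow up as the inner circle $\{|z|=r\}$ approaches $\partial W_H(A)$; the cited source works precisely to preclude this. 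So the skeleton is sound as a roadmap, but the quantitative heart --- which produces the specific universal constant and which you yourself flag as ``the hardest step'' --- is absent, and it is the entire content of the theorem being invoked.
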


In \cite{embree2022descriptive}, the author 
presents a simple example to illustrate the potential of this result in the context of convergence of GMRES. 

 \chen{In this paper, we consider a special family of saddle-point linear systems that arise from discretization of fluid flow problems.} We include in our discussion block-diagonal preconditioners and certain block-triangular preconditioners for which no previous FOV analysis is available.
On the other hand, our analysis has some limitations compared to the well-studied FOV analysis that excludes the origin. For example, in \cite{loghin2004analysis}, scaling is effectively used to allow for applying FOV analysis to the discrete Navier-Stokes equations with a small viscosity coefficient when the field of values does not include the origin. In our analysis we are not able to utilize scalings in the same manner, and we require the skew-symmetric part of the linear system to be small norm-wise.

The remainder of this paper is structured as follows. In Section \ref{sec:main} we \chen{review some useful results in the literature and} present an analysis that deals with zero in the field of values. In Section \ref{sec:fov_sp} we specialize our results to saddle-point systems. In Section~\ref{sec:experiments} we discuss a few examples of relevant applications and present some numerical results.  Finally, we draw some conclusions in Section \ref{sec:conclusions}.

\section{FOV Analysis that Includes Zero}
\label{sec:main}
In this section, we derive sufficient conditions that will serve us in our analysis for saddle-point systems. 
\subsection{Preliminaries}
Let us present a few known results that we will use in our analysis.  \chennewnew{Some of the definitions and results that follow are specialized to real matrices and vectors. } 
\begin{definition} \chennewnew{\cite[Page 311]{HornJohnson1985}}
For two symmetric positive definite matrices $H_1 \in \mathbb{R}^{n\times n}$ and $H_2 \in \mathbb{R}^{m\times m}$, we define the $ (H_1,H_2)$-norm for a matrix  $M\in \mathbb{R}^{m\times n}$ as 
\begin{equation*}
\| M\|_{H_1, H_2} = \max_{\chennewnew{\vv \in \mathbb{R} \backslash \{\mathbf 0\}} }\frac{\| M \vv\|_{H_2}}{\| \vv\|_{H_1}}.
\end{equation*}
\label{def:H1H2}
\end{definition}
The following equalities, given in \cite[Eq. (2.4)]{loghin2004analysis}, are immediate from Definition \ref{def:H1H2}: $$ 
    \| H_2^{-1/2} M H_1^{-1/2}\|_2 = \| M\|_{H_1,H_2^{-1}} = \| MH_1 ^{-1}\|_{H_1^{-1},H_2^{-1}} = \|H_2^{-1}M\|_{H_1, H_2}.
 $$
 The following \chen{additional} properties from \cite{loghin2004analysis} are useful for our analysis.
\begin{lemma}[{\cite[Lemma \chen{2.1}]{loghin2004analysis}}]
\label{lem:norms}
  Let $M \in \mathbb{R}^{m\times n}$ have full rank, and let $H_1 \in \mathbb{R}^{n\times n}$, $H_2 \in \mathbb{R}^{m\times m}$ be two symmetric  positive definite matrices. Then
  \begin{itemize}
  \item[(i)]
  $ \displaystyle
    \| M\|_{H_1, H_2^{-1}} = \max_{\vv\in \mathbb{R}^n \backslash \{\mathbf 0\} }\max_{\ww\in \mathbb{R}^m \backslash \{\mathbf 0\} } \frac{\ww^T M \vv}{\| \vv\|_{H_1} \| \ww\|_{H_2}}
$. 
 \item[(ii)] If $m=n$,
  \begin{equation*}
    \| M^{-1}\|_{H_2^{-1},H_1}^{-1} = \min_{\vv\in \mathbb{R}^n \backslash \{\mathbf 0\} }\max_{\ww\in \mathbb{R}^m \backslash \{\mathbf 0\} } \frac{\ww^T M \vv}{\| \vv\|_{H_1} \| \ww\|_{H_2}} .
  \end{equation*}
  \item[(iii)]  
  If $H_i\in \mathbb{R}^{n_i \times n_i},i=1,2,3$ are three symmetric and positive definite matrices and $R \in \mathbb{R}^{n_1 \times n_2}, Q\in \mathbb{R}^{n_2 \times n_3}$  then
  \begin{equation*}
   \| R Q\|_{H_3, H_1} \leq \| Q\|_{H_3,H_2} \| R\|_{H_2, H_1}.
  \end{equation*}
  \end{itemize}
\end{lemma}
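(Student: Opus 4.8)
The plan is to reduce all three parts to a single duality identity: for any Hermitian positive definite $H$ and any vector $y$ of matching size,
\[
\|y\|_{H^{-1}} = \max_{w \neq 0} \frac{w^T y}{\|w\|_{H}},
\]
with the maximum attained at $w = H^{-1}y$. I would establish this first, as it is the only nonroutine ingredient. Writing $w^T y = (H^{1/2}w)^T (H^{-1/2}y)$ and applying the ordinary Euclidean Cauchy--Schwarz inequality gives $w^T y \le \|H^{1/2}w\|_2 \,\|H^{-1/2}y\|_2 = \|w\|_{H}\,\|y\|_{H^{-1}}$; equality is achieved by taking $H^{1/2}w$ parallel to $H^{-1/2}y$, i.e.\ $w = H^{-1}y$, which proves the identity. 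Equivalently, this just says that $\|\cdot\|_{H^{-1}}$ is the dual norm of $\|\cdot\|_{H}$ under the standard inner product.

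For part (i), I would start from the definition $\|M\|_{H_1,H_2^{-1}} = \max_{v \neq 0} \|Mv\|_{H_2^{-1}} / \|v\|_{H_1}$ and apply the duality identity with $H = H_2$ and $y = Mv$, so that $\|Mv\|_{H_2^{-1}} = \max_{w \neq 0} w^T M v / \|w\|_{H_2}$; merging the two maxima yields the asserted double-maximum formula. For part (ii), since $m = n$ and $M$ has full rank, $M$ is invertible, so $y \mapsto v := M^{-1}y$ is a bijection of $\mathbb{R}^n \setminus \{0\}$ onto itself. Hence
\[
\|M^{-1}\|_{H_2^{-1},H_1} = \max_{y \neq 0}\frac{\|M^{-1}y\|_{H_1}}{\|y\|_{H_2^{-1}}} = \max_{v \neq 0}\frac{\|v\|_{H_1}}{\|Mv\|_{H_2^{-1}}},
\]
and taking reciprocals gives $\|M^{-1}\|_{H_2^{-1},H_1}^{-1} = \min_{v \neq 0} \|Mv\|_{H_2^{-1}} / \|v\|_{H_1}$. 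Applying the same duality identity to $\|Mv\|_{H_2^{-1}}$ as in (i) turns this into the claimed $\min_{v}\max_{w}$ expression.

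Part (iii) is just submultiplicativity of the mixed operator norms and needs no duality. For every $v \neq 0$, the defining inequalities of the two norms give $\|RQv\|_{H_1} \le \|R\|_{H_2,H_1}\,\|Qv\|_{H_2} \le \|R\|_{H_2,H_1}\,\|Q\|_{H_3,H_2}\,\|v\|_{H_3}$; dividing by $\|v\|_{H_3}$ and maximizing over $v$ yields the bound. As for difficulties: there is essentially no hard step here. The only points requiring care are verifying the equality case in Cauchy--Schwarz for the duality identity (so that the $\sup$ is genuinely attained and can be written as a $\max$), and, in part (ii), noting that squareness plus full rank makes $M$ invertible so the change of variables $y = Mv$ is a bijection; everything else is bookkeeping with the definitions.
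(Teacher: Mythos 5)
Your proof is correct. The paper does not actually prove this lemma; it cites it directly from Loghin and Wathen \cite{loghin2004analysis}, so there is no in-paper argument to compare against. Your derivation via the dual-norm identity $\|y\|_{H^{-1}} = \max_{w\neq 0} w^T y / \|w\|_H$ (Cauchy--Schwarz in the $H$-weighted inner product, with equality at $w = H^{-1}y$), combined with the substitution $y = Mv$ for part (ii) and plain submultiplicativity for part (iii), is the standard and complete way to establish this result.
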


The following result from \cite{driscoll1998potential}, adapted to our notation and context, is useful in our analysis.

\begin{theorem}[{\cite[Theorem 1]{driscoll1998potential}}]\label{thm:exp_decay}
    Let \chennew{$\mathbf{P}_n$} denote the set of polynomials $p$ of degree at most $n$  with $p(0) = 1$. For a compact set $S$ in the complex plane, with the origin not included in or surrounded by $S$  and no isolated points, define 
    $$
    E_n(S) = \min_{p\in \chennew{\mathbf{P}_n}}\max_{z\in S}|p(z)|
    $$
    and the corresponding {\underline  {estimated {asymptotic} convergence factor}} $$
    \rho = \lim_{n \to \infty }(E_n(S))^{1/n}.
    $$
    Let $g(z)$ be the Green's function associated with $S$, defined in the exterior of $S$, satisfying $\nabla^2 g = 0$ outside of $S$, $g(z)\to 0$ as $z \to \partial S$, and $g(z)-\log|z|\to C$ as $|z| \to \infty$ for some \chen{constant} $C$. Then,
    $$ \rho = \exp(-g(0)).$$
\end{theorem}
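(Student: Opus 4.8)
The plan is to establish the two matching estimates $\liminf_n E_n(S)^{1/n}\ge e^{-g(0)}$ and $\limsup_n E_n(S)^{1/n}\le e^{-g(0)}$, which together show that the limit defining $\rho$ exists and equals $e^{-g(0)}$. Let $\Omega$ be the unbounded connected component of $\overline{\mathbb C}\setminus S$; the hypothesis that $0$ is not surrounded by $S$ means exactly that $0\in\Omega$, and the existence of the Green's function $g=g_\Omega(\cdot,\infty)$ forces $\mathrm{cap}(S)>0$. Since $g$ is harmonic and positive on $\Omega$ (as is standard for Green's functions) and $0\in\Omega$, we have $g(0)>0$, so the asserted value $e^{-g(0)}$ is genuinely less than $1$. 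A preliminary observation is that $E_n$ is submultiplicative (if $p$, $q$ realize $E_m$, $E_k$ then $pq$ is admissible for degree $m+k$ with $\|pq\|_S\le\|p\|_S\,\|q\|_S$), so by Fekete's lemma applied to $\log E_n$ the limit $\rho$ already exists; it remains to identify its value.

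For the lower bound I would invoke the Bernstein--Walsh lemma: every polynomial $p$ with $\deg p\le n$ satisfies $|p(z)|\le\|p\|_S\,e^{n g(z)}$ for all $z\in\Omega$. Taking $z=0$ and an admissible $p$ (so $p(0)=1$), this gives $1\le\|p\|_S\,e^{n g(0)}$, hence $\|p\|_S\ge e^{-n g(0)}$; minimizing over $p\in P_n$ yields $E_n(S)\ge e^{-n g(0)}$ and therefore $\liminf_n E_n(S)^{1/n}\ge e^{-g(0)}$.

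For the upper bound I would build near-extremal polynomials from Fekete points. Let $\zeta_1^{(n)},\dots,\zeta_n^{(n)}\in S$ maximize $\prod_{i<j}|\zeta_i-\zeta_j|$ and set $\omega_n(z)=\prod_{j=1}^n\bigl(z-\zeta_j^{(n)}\bigr)$. I would use two classical facts: (i) $\|\omega_n\|_S^{1/n}\to\mathrm{cap}(S)$ (Fekete--Szeg\H{o}); and (ii) the normalized zero-counting measures $\nu_n=\tfrac1n\sum_j\delta_{\zeta_j^{(n)}}$ converge weak-$*$ to the equilibrium measure $\mu_S$ of $S$. Since $0\in\Omega$ (so $0\notin S$), the function $t\mapsto\log|t|$ is continuous on $S$, whence by (ii) $\tfrac1n\log|\omega_n(0)|=\int\log|t|\,d\nu_n(t)\to\int\log|t|\,d\mu_S(t)$; and the potential representation $g(z)=\int\log|z-t|\,d\mu_S(t)-\log\mathrm{cap}(S)$, valid on $\Omega$, evaluated at $z=0$ identifies this limit as $g(0)+\log\mathrm{cap}(S)$. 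Setting $p_n(z)=\omega_n(z)/\omega_n(0)$, which is admissible of degree $n$, we obtain
\[
E_n(S)^{1/n}\le\|p_n\|_S^{1/n}=\frac{\|\omega_n\|_S^{1/n}}{|\omega_n(0)|^{1/n}}\longrightarrow\frac{\mathrm{cap}(S)}{\mathrm{cap}(S)\,e^{g(0)}}=e^{-g(0)},
\]
so $\limsup_n E_n(S)^{1/n}\le e^{-g(0)}$. Combining the two bounds gives $\rho=e^{-g(0)}$.

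The step I expect to be the real work is the upper bound, because it rests on the limiting distribution of Fekete points and on the identification of $g$ with the (shifted) equilibrium potential; an equivalent route replaces Fekete points by Leja points, or --- when $S$ is connected --- by the Faber polynomials of the exterior conformal map $\Phi\colon\Omega\to\{|w|>1\}$ with $g=\log|\Phi|$, but some potential-theoretic input of this kind seems unavoidable. A minor point worth recording is that the equality $\rho=e^{-g(0)}$ as proved above uses only $\mathrm{cap}(S)>0$ and $0\in\Omega$; the extra hypotheses in the statement ("no isolated points'' and $g\to 0$ on $\partial S$) serve to guarantee that $g$ is the usual, boundary-regular Green's function and that $g(0)>0$, i.e.\ that $\rho$ is a bona fide contraction factor.
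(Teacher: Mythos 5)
The paper does not prove this statement; it simply cites it as Theorem~1 of the Driscoll--Toh--Trefethen reference, so there is no in-paper argument to compare against. Your proof is the standard potential-theoretic argument (Bernstein--Walsh for the lower bound, Fekete-point asymptotics and the equilibrium-potential representation $g(z)=\int\log|z-t|\,d\mu_S(t)-\log\mathrm{cap}(S)$ for the upper bound), which is essentially the route taken in that reference, and it is correct: the hypotheses $0\notin S$ and $0$ not surrounded by $S$ are used exactly where needed (continuity of $\log|\cdot|$ on $S$ and $0\in\Omega$ for Bernstein--Walsh), and your closing remark about which hypotheses are load-bearing is accurate.
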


\subsection{Sufficient Conditions}

\begin{lemma}\label{lem:fov_main} 
  \chen{Given constants  $a,b,c>0$ with
  \begin{equation} 
  bc<1,
  \label{cond4}
  \end{equation}
  and a convergence tolerance $\varepsilon>0$, for any nonsingular $n \times n$ matrix $A$  and positive definite $H$ of the same dimensions that satisfy 
   \begin{subequations}
   \begin{align}
       \| A \|_{H} \leq a;  \label{cond1}\\
  \| A ^{-1} \|_H \leq b  ; \label{cond2} \\ 
  \|( HA - A^T H ) / 2\|_{H,H^{-1}} \leq c \label{cond3}
    \end{align}
    \end{subequations}
  for these $a$, $b$, and $c$ values, 
  and any $n$-vector $\ff$ and an initial guess $\xx_0$, 
  there exists some integer $m \ge 1$
  that depends on $a$, $b$, and $c$, such that
  the residual $\rr_m = \ff - A \xx_m$ generated by $m$ steps of GMRES applied to the linear system $A \xx=\ff$ in the $H$-norm satisfies $$\|\rr_m\|_H/\|\rr_0\|_H \le \varepsilon.$$}%
\end{lemma} 
\begin{proof}
  We first derive a bound on the field of values of $A$.
  Suppose the conditions hold. Then, for any $z \in W_H(A)$, we have $|z| \leq \| A\|_H \leq a$ and 
  \begin{align*}
  |Im(z)| &\leq \max_{\xx\in \mathbb{C}^{n}}\left| \left( \frac{\xx^* H A \xx}{\xx^* H \xx} - \left(\frac{\xx^* H A \xx}{\xx^* H \xx}\right)^* \right) \Big/2\right| \\
    &= \max_{\xx\in \mathbb{C}^{n}}\left| \left( \frac{\xx^* ( H A - A^T H) \xx}{ 2 \xx^* H \xx} \right)\right| \\
    &\leq \|( HA - A^T H ) / 2\|_{H,H^{-1}} \leq c.
  \end{align*}
\chennewnew{Notice that  the last inequality holds because $H A - A^T H$ is skew-symmetric, and hence its field of values lies on the imaginary axis \cite[Property 1.2.5]{HornJohnson1991}.}

We then have  $$\Omega_{CG} \subseteq \Omega_{D} :=  \{z:  \frac{1}{b}\leq |z| \leq a\} \cap \{ z \in \mathbb{C}: |Im(z)| \leq c\}.$$
By Theorem \ref{thm:fieldofvalues}, we have the  GMRES convergence result
\begin{align*}
  \frac{\| \rr_j\|_H} {\| \rr_0\|_H} &\leq \min_{p \in \mathbf{P}_j} \| p(A) \|_H \leq \chen{(2+\sqrt{7})} \min_{p \in \mathbf{P}_j} \max_{z \in \Omega_{CG}}| p(z) |. \\
\end{align*}
\begin{figure}[htbp]
  \centering
\begin{tikzpicture}
  % Shaded region: \Omega_{CG} (moved before axes)
  \begin{scope}
    \clip (-2, -0.8) rectangle (2, 0.8);
    \fill[cyan, opacity=0.3] (0, 0) circle (1.5);
    \fill[white] (0, 0) circle (1);
  \end{scope}
  
  % Axes (after shading so they appear on top)
  \draw[->] (-2, 0) -- (2, 0) node[right] {$\text{Re}(z)$};
  \draw[->] (0, -1.8) -- (0, 1.8) node[above] {$\text{Im}(z)$};
  
  % Large circle: D(0, b)
  \draw[black] (0, 0) circle (1.5);
  
  % Small circle: D(0, 1/a)
  \draw[black, thick, dashed] (0, 0) circle (1);
  
  % Horizontal bounds for |Im(z)| <= c
  \draw[black, thick, dotted] (-1.5, 0.8) -- (1.5, 0.8);
  \draw[black, thick, dotted] (-1.5, -0.8) -- (1.5, -0.8);
  
  % Labels for horizontal bounds
  \node at (1.8, 0.8) {$c$};
  \node at (1.8, -0.8) {$-c$};
  
  % Broken arrows pointing to radii
  % Arrow pointing to outer circle
  \draw[<-, dashed, thick] (45:1.5) -- (45:2.2);
  \node at (45:2.4) {$a$};
  
  % Arrow pointing to inner circle
  \draw[<-, dashed, thick] (135:1) -- (135:1.8);
  \node at (135:2.0) {$\frac{1}{b}$};
  
\end{tikzpicture}
\caption{The shaded region is $\Omega_{D}$ when conditions \eqref{cond4} and \eqref{cond1}--\eqref{cond3} of Lemma \ref{lem:fov_main} hold}
\label{fig:aclt1}
\end{figure}
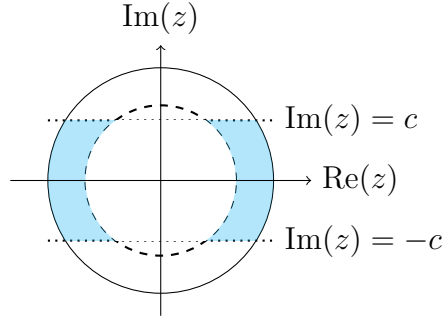

Since Condition \eqref{cond4} holds, the origin is not surrounded by $\Omega_{CG}$, and it follows from Theorem~\ref{thm:exp_decay} that there is always a polynomial  (of some degree) with value 1 at the origin that has a maximum magnitude strictly less than 1 on the closure of this set and hence
GMRES converges with an asymptotic rate given by $\exp(-g(0))<1$, where $g$ is the Green's function of this set with a pole at $\infty$ \cite{choi2015roots,crouzeix2019spectral}.
\end{proof}

\begin{remark} If  \eqref{cond4} in Lemma \ref{lem:fov_main} does not hold, the iterative solver may still converge but we cannot prove convergence using our technique of proof. Specifically, it is immediate to see that $\Omega_{CG}$ is connected and due to the maximum modulus principle, we can only obtain $\min_{p \in \mathbf{P}_j, p(0) = 1} \max_{z \in \Omega_{CG}}| p(z) | \chen{=} 1$, which does not indicate convergence; see Figure~\ref{fig:acge1} for a graphical illustration.
\end{remark}
\begin{figure}[htbp]
    \centering
  \begin{tikzpicture}[scale=1.2]
   
    % Shaded region: \Omega_{D} (moved before axes)
    \begin{scope}
      \clip (-2, -0.8) rectangle (2, 0.8);
      \fill[cyan, opacity=0.3] (0, 0) circle (1.5);
      \fill[white] (0, 0) circle (0.5);
    \end{scope}
    
    % Axes (after shading so they appear on top)
    \draw[->] (-2, 0) -- (2, 0) node[right] {$\text{Re}(z)$};
    \draw[->] (0, -1.8) -- (0, 1.8) node[above] {$\text{Im}(z)$};
  
    % Large circle: D(0, a)
    \draw[black] (0, 0) circle (1.5);
  
    % Small circle: D(0, 1/b)
    \draw[black, thick, dashed] (0, 0) circle (0.5);
  
    % Horizontal bounds for |Im(z)| <= c
    \draw[black, thick, dotted] (-1.5, 0.8) -- (1.5, 0.8);
    \draw[black, thick, dotted] (-1.5, -0.8) -- (1.5, -0.8);
    
    % Labels for horizontal bounds (moved to avoid clutter)
    \node at (1.8, 0.8) {$c$};
    \node at (1.8, -0.8) {$-c$};
    
    % Broken arrows pointing to radii
    % Arrow pointing to outer circle
    \draw[<-, dashed, thick] (45:1.5) -- (45:2.2);
    \node at (45:2.4) {$a$};
    
    % Arrow pointing to inner circle - moved to lower left
    \draw[<-, dashed, thick] (135:0.5) -- (135:1.8);
    \node at (135:2.0) {$\frac{1}{b}$};
    
  \end{tikzpicture}
  \caption{The shaded region is $\Omega_{D}$ when $bc \ge 1$ (i.e., when  \eqref{cond4} in Lemma \ref{lem:fov_main} is violated)}
  \label{fig:acge1}
\end{figure}
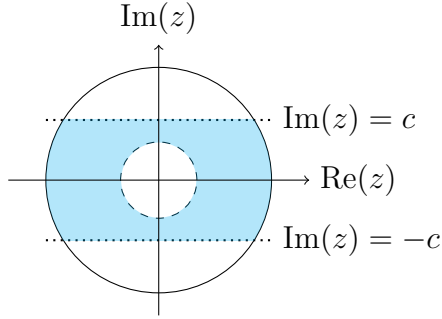
\subsection{Scope and Limitations of the Analysis}

\chen{The use of quadratic forms to establish the notion of field of values-based bounds goes back to early work that studied residual-minimizing iterative methods \cite[Theorem 3.3]{eisenstat1983variational}. We provide here the widely used definition of FOV equivalence.}
\begin{definition}\label{def:fov_eq}
  Given two nonsingular matrices $M,A\in \mathbb{R}^{n\times n}$, \chen{and a symmetric positive definite matrix $H \in {\mathbb R}^{n \times n}$,}  $M$ is {\em $H$-field-of-values equivalent to $A$} if there exist positive constants $\alpha,\beta$ independent of $n$ such that 
  \begin{equation}
  \alpha \leq \frac{(MA \xx,\xx)_{H}}{(\xx,\xx)_{H}}, \ \frac{\| MA \xx\|_{H}}{\| \xx\|_{H}}\leq \beta, \chen{\qquad \forall \xx\in\mathbb{R}^n\setminus\{\mathbf{0}\}.}
  \end{equation}
\end{definition}

If $M$ is $H$-field-of-values equivalent to $A$, the FOV of $MA$ is bounded by a well-defined region:
\begin{equation*}
  W_H(MA) \subseteq \Omega_{\text{FOV}} := \{ z:  \alpha \leq \text{Re} (z),\, |z| \leq \beta \}.
\end{equation*}
For a geometric illustration of $\Omega_{\text{FOV}}$, see Figure \ref{fig:fov_ex_0}.

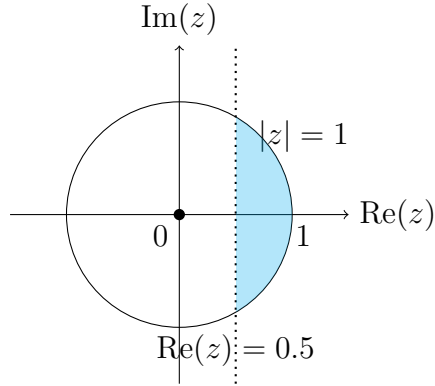
\begin{figure}[h]
  \centering
  \begin{tikzpicture}[scale=1.5]
    % Define parameters
    \def\alpha{0.5}
    \def\beta{1}
    
    % Draw the axes
    \draw[->] (-1.5,0) -- (1.5,0) node[right] {$\text{Re}(z)$};
    \draw[->] (0,-1.5) -- (0,1.5) node[above] {$\text{Im}(z)$};
    
    % Draw the circle |z| = \beta
    \draw (0,0) circle (\beta);
    \node at (1.1*\beta,0.7*\beta) {$|z| = \beta$};
    
    % Draw the vertical line Re(z) = \alpha
    \draw[black, thick, dotted] (\alpha,-1.5) -- (\alpha,1.5);
    \node[black] at (\alpha,-1.2) {$\text{Re}(z) = \alpha$};
    
    % Fill the region
    \begin{scope}
        \clip (\alpha,-1.5) rectangle (1.5,1.5);
        \clip (0,0) circle (\beta);
        \fill[cyan, opacity=0.3] (\alpha,-2.5) rectangle (1.5,1.5);
    \end{scope}
    
    % Mark the origin
    \fill (0,0) circle (0.05) node[below left] {$0$};
    
    % Label points
    %\node[anchor=north west] at (\alpha,0) {$\alpha$};
    \node[anchor=north] at (\beta + 0.1,0) {$\beta$};
    
  \end{tikzpicture}
  \caption{The shaded region is $\Omega_{\text{FOV}}$ with $\alpha = 0.5$ and $\beta = 1$}
  \label{fig:fov_ex_0}
  \end{figure}

The analysis in \cite{loghin2004analysis} and elsewhere (see, e.g., \cite{klawonn1999block}) pertaining to the case that 0 is not part of the field of values is based on obtaining convergence independent of the matrix dimensions (or mesh size when discretizations of partial differential equations are concerned) by scaling the preconditioner or the inner product. In that case,  Definition \ref{def:fov_eq} is a convergence criterion and it allows for making $\alpha$ and $\beta$ arbitrary (positive) and independent of the matrix dimensions.  

In contrast, in our case,   \eqref{cond4} requires $bc$ to be small. While scaling reduces one of $b$ or $c$, it increases the other. Therefore, a simple scaling strategy does not work in the case we are considering, which reveals a limitation of our analysis. We note that condition \eqref{cond2} is rather standard by norm equivalence considerations (see, for example, \cite[Lemma \chen{2.3}]{loghin2004analysis}). It is condition \eqref{cond3} that seems to present the difficulty, because it requires the skew-symmetric part of the operator to be smaller than the radius of the inner disk; see Figure \ref{fig:aclt1}. Therefore, practically speaking, our analysis is limited to cases where the preconditioned matrix is only mildly nonsymmetric.

  However, we note that this lemma may be improved to allow for looser conditions by using a more sophisticated analysis. 
  \begin{example}
  This is a modified example from \cite{embree2022descriptive}:
  \begin{equation*}
      A = A_{-1}\oplus A_{+1},
  \end{equation*}
where $A_{-1}\in\mathbb{R}^{n\times n}$ and $A_{+1}\in\mathbb{R}^{n\times n}$ are given by $$A_{-1} = \begin{bmatrix}
  -1 & 1/4 & & & \\
  & -1 & 1/4 & & \\
  & & \ddots & \ddots & \\
  & & & -1 & 1/4 \\
  & & & & -1
\end{bmatrix}$$
and $$A_{+1} = \begin{bmatrix}
  2 & 1.2 & & & \\
  & 2 & 1.2 & & \\
  & & \ddots & \ddots & \\
  & & & 2 & 1.2 \\
  & & & & 2
\end{bmatrix}.$$
The field of values of $A$  \chen{is contained in the convex hull} of two disks centered at $-1$ with radius $1/4$ and at $2$ with radius $1.2$. \chen{Note that while the union of these two disks contains the field of values of $A$ independent of the dimension, the field of values itself does depend (mildly) on $n$.} The inverse $A^{-1}$ is available analytically, and it can be shown that 
$\| A^{-1}\|_2^{-1} \to \frac34$ as $n \to \infty$; see, for 
 example, \cite{hansen1991analysis} for useful relevant results for Toeplitz matrices. For a finite value of $n$, the \chen{reciprocal of the norm of $A^{-1}$} needs to be computed numerically, and we have experimentally observed that it is bounded between $0.74$ and $0.76$ for relatively modest values of $n$.

We provide a graphical illustration in Figure \ref{fig:example}. Here $c = 1.2$ and $b \geq \frac{1}{0.76}$. The condition \eqref{cond4} is violated, but GMRES would still converge for a linear system with the matrix $A$  because $\Omega_{CG}$ does not surround/include the origin. A more careful analysis that tracks the boundary of the FOV (see, e.g., \cite{loisel2018path}) might result in conditions that are easier to satisfy.

\chen{ In \cite{GreenbaumWellen2024} there is a detailed discussion and a number of examples, including ones of block diagonal matrices similar to the one in Figure~4, where the field of values is divided into two pieces by removing a disk.}

\begin{figure}[ht]
  \centering
  \includegraphics[width=0.5\textwidth]{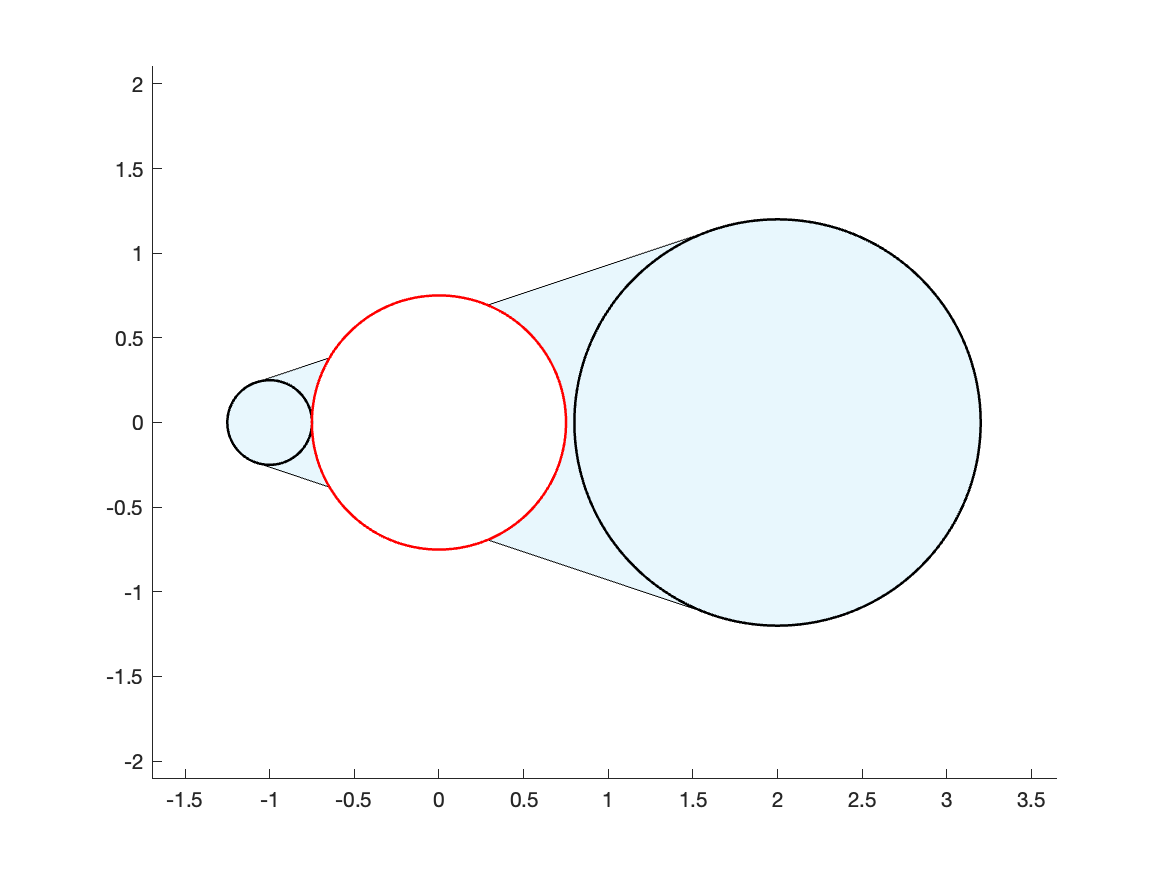}
  \caption{The shaded region is $\Omega_{CG}$ for $A$.}
  \label{fig:example}
\end{figure}
\end{example}

While the limitation we have noted is considerable, our analysis substantially broadens the scope  of preconditioners for which FOV analysis can be carried out. In particular, in terms of the quantities of Definition 
\ref{def:fov_eq}, our analysis makes it possible to consider \chen{the scenario that there exists some nonzero $n$-length real vector $\xx$ for which}
\begin{equation} 
\frac{(MA \xx,\xx)_{H}}{(\xx,\xx)_{H}}\leq 0.
\label{eq:MAxx}
\end{equation}

In the upcoming sections, we present specific examples related to discretized fluid flow problems that demonstrate the advantages and the limitations of our analysis.

\section{Preconditioned Saddle-Point Matrices with Zero in the Field of Values}
\label{sec:fov_sp}

Using the results of Lemma \ref{lem:fov_main}, we now apply our theory to the  important case of a nonsymmetric saddle-point system.

Consider 
\begin{equation}
\label{eq:K}
  K = \begin{bmatrix}
    F & B^T \\
    B & 0
  \end{bmatrix},
\end{equation}
where $F \in \mathbb{R}^{n \times n}$ is nonsingular and $B \in \mathbb{R}^{m \times n}$ is  full row rank. We assume that $F$ is nonsymmetric and positive real (or positive definite), namely, that $\uu^T F \uu> 0$ for all \chen{$\uu \in \mathbb{R}^n\setminus\{\mathbf{0}\}$.}

Let
\begin{equation}
\label{eq:S}
S = B F^{-1} B^T
\end{equation}
be the Schur complement, and define
  \begin{equation} \label{eq:H} H = \begin{bmatrix}
  H_1 & 0 \\
  0 & H_2
\end{bmatrix},
\end{equation}
where $H_1 \in {\mathbb R^{n \times n}}$
and $H_2 \in {\mathbb R^{m \times m}}$ are symmetric positive definite.

To be able to perform our analysis, we need to make some specific assumptions on $H_1$ and $H_2$. We note that these assumptions amount to sufficient conditions, and in practice one may relax them. 

\begin{definition}
    We set $H_1$ as the symmetric part of $F$, and define $N$ as its skew-symmetric part: 
 \begin{equation} \label{eq:sym_skewsym} F = H_1 +N,\qquad H_1 = \frac{F+F^T}{2}, \qquad N=\frac{F-F^T}{2}.\end{equation}
Note that $H_1$ is symmetric positive definite by our  assumptions on $F$.
\label{defn:H1N}
\end{definition}

\begin{assumption} 
\chen{
\begin{equation} \label{req:1}
   \| N\|_{H_1, H_1 ^{-1}} \leq \ttt,
\end{equation}
where  $\ttt$ is a constant independent of the matrix dimensions.
\label{asmp1}}
\end{assumption}
\begin{lemma}  \label{lem:F}
A bound on the weighted norm of $F$ is given by
  \begin{align*}
    \| F\|_{H_1, H_1^{-1}}\leq (1 + \ttt).
  \end{align*}
\end{lemma}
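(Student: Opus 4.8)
The plan is to decompose $F = H_1 + N$ using \eqref{eq:sym_skewsym} and bound the weighted norm of each piece separately via the triangle inequality for $\|\cdot\|_{H_1, H_1^{-1}}$. First I would observe that the symmetric part $H_1$ contributes a norm of exactly $1$: indeed, using the identity noted after Definition \ref{def:H1H2}, $\|H_1\|_{H_1, H_1^{-1}} = \|H_1^{-1/2} H_1 H_1^{-1/2}\|_2 = \|I\|_2 = 1$. Next, the skew-symmetric part $N$ is controlled directly by Assumption \eqref{req:1}, which gives $\|N\|_{H_1, H_1^{-1}} \leq \alpha$ (recalling that $\ttt$ is the macro for $\alpha$). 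Combining these two bounds yields $\|F\|_{H_1, H_1^{-1}} \leq \|H_1\|_{H_1, H_1^{-1}} + \|N\|_{H_1, H_1^{-1}} \leq 1 + \ttt$, which is the claim.

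The only point requiring a small amount of care is justifying that $\|\cdot\|_{H_1, H_1^{-1}}$ is genuinely a norm on matrices (so that the triangle inequality applies). This follows because, by the displayed identity after Definition \ref{def:H1H2}, $\|M\|_{H_1, H_1^{-1}} = \|H_1^{-1/2} M H_1^{-1/2}\|_2$, i.e. it is the spectral norm composed with a fixed invertible linear map $M \mapsto H_1^{-1/2} M H_1^{-1/2}$, hence inherits all norm properties including subadditivity. I expect this to be the main (and essentially only) obstacle, and it is routine: everything else is a direct substitution of the definition of $H_1$ as the symmetric part of $F$ and an appeal to Assumption \eqref{req:1}.

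Alternatively, one could avoid the matrix-norm formalism entirely and argue pointwise: for any $u, v$, write $v^T F u = v^T H_1 u + v^T N u$, bound $|v^T H_1 u| \leq \|v\|_{H_1}\|u\|_{H_1}$ by Cauchy--Schwarz in the $H_1$-inner product, bound $|v^T N u| \leq \alpha \|v\|_{H_1}\|u\|_{H_1}$ using Lemma \ref{lem:norms}(i) together with \eqref{req:1}, and then take suprema over $u$ and $v$ of appropriate norm, invoking Lemma \ref{lem:norms}(i) once more to identify the result with $\|F\|_{H_1, H_1^{-1}}$. Either route is short; I would present the first, as it is the cleanest.
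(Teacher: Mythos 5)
Your proof is correct and matches the paper's intent: the paper simply declares the result ``immediate from equation \eqref{req:1},'' and your decomposition $F = H_1 + N$, together with $\|H_1\|_{H_1,H_1^{-1}} = 1$ and the triangle inequality, is exactly the short calculation being alluded to.
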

\begin{proof}
    This is immediate from equation \eqref{req:1}.
\end{proof}
\begin{lemma}\label{lem:F_inv} The inverse of $F$ satisfies 
  \[ 
  \| F^{-1} \|_{H_1^{-1},H_1}  \leq 1.
  \]
\end{lemma}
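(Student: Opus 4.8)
The plan is to transform the weighted-norm statement into an ordinary spectral-norm bound for the identity plus a skew-symmetric perturbation, and then to use the elementary fact that such a matrix is expansive in the Euclidean norm.

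First I would invoke the congruence identity recorded immediately after Definition~\ref{def:H1H2} to write $\|F^{-1}\|_{H_1^{-1},H_1} = \|H_1^{1/2} F^{-1} H_1^{1/2}\|_2$. Since $H_1^{1/2} F^{-1} H_1^{1/2}$ is the inverse of $\widehat F := H_1^{-1/2} F H_1^{-1/2}$, it then suffices to prove $\|\widehat F^{\,-1}\|_2 \le 1$. Next, the splitting \eqref{eq:sym_skewsym} gives $\widehat F = I + \widehat N$ with $\widehat N := H_1^{-1/2} N H_1^{-1/2}$, and $\widehat N$ is skew-symmetric because $N$ is skew-symmetric and $H_1^{-1/2}$ is symmetric. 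Finally, for any real vector $x$ one has $x^T \widehat N x = 0$, so $\|\widehat F x\|_2^2 = \|x\|_2^2 + 2 x^T \widehat N x + \|\widehat N x\|_2^2 = \|x\|_2^2 + \|\widehat N x\|_2^2 \ge \|x\|_2^2$; in particular $\widehat F$ is nonsingular, and setting $y = \widehat F x$ yields $\|\widehat F^{\,-1} y\|_2 \le \|y\|_2$ for every $y$, i.e. $\|\widehat F^{\,-1}\|_2 \le 1$, as required.

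I do not expect a genuine obstacle here; the only points requiring care are keeping track of which of $H_1$ and $H_1^{-1}$ plays the role of the ``domain'' weight and which the ``range'' weight when applying the identity after Definition~\ref{def:H1H2}, and noticing that it is precisely the skew-symmetry of $N$ (hence of $\widehat N$) that kills the cross term $x^T \widehat N x$ and makes $I+\widehat N$ expansive. An equivalent self-contained alternative that sidesteps the matrix square roots is to substitute $w = F^{-1} v$, reducing the claim to $\|F w\|_{H_1^{-1}} \ge \|w\|_{H_1}$ for all $w$; expanding $w^T F^T H_1^{-1} F w$ with $F^T = H_1 - N$ gives $w^T\bigl(H_1 - N H_1^{-1} N\bigr) w = \|w\|_{H_1}^2 + \|N w\|_{H_1^{-1}}^2 \ge \|w\|_{H_1}^2$, again using $N^T = -N$. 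Either version works; I would likely present the second, since it uses only the definitions already introduced in the paper.
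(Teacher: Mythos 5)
Your proof is correct, and it reaches the result by a different (and somewhat more self-contained) route than the paper. The paper invokes the min--max variational characterization in Lemma~\ref{lem:norms}(ii), $\|F^{-1}\|_{H_1^{-1},H_1}^{-1}=\min_v\max_w \frac{w^TFv}{\|v\|_{H_1}\|w\|_{H_1}}$, restricts the supremum to the single choice $w=v$, and observes that $v^TFv=v^TH_1v=\|v\|_{H_1}^2$ because the skew-symmetric part contributes nothing to the quadratic form; this yields the lower bound $\ge 1$ directly from the positive-real hypothesis. You instead compute the relevant weighted norm exactly: your second variant expands $\|Fw\|_{H_1^{-1}}^2 = w^T(H_1-NH_1^{-1}N)w = \|w\|_{H_1}^2 + \|Nw\|_{H_1^{-1}}^2$, and your first variant is the same calculation after the congruence $\widehat F=H_1^{-1/2}FH_1^{-1/2}=I+\widehat N$. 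Both of your variants and the paper's proof rely on the same mechanism, namely skew-symmetry killing the cross term, but the paper's argument is a one-line application of an already-established lemma and uses only $v^TFv\ge\|v\|_{H_1}^2$, whereas yours bypasses Lemma~\ref{lem:norms} entirely and exhibits the exact excess $\|Nw\|_{H_1^{-1}}^2$, which makes the expansivity of $F$ (in the appropriate weighted norms) manifest and quantitative rather than just a by-product of choosing $w=v$ suboptimally. Either is acceptable; the paper's version is tighter to its surrounding framework, while yours is more elementary and would stand without the supporting lemma.
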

\begin{proof} The result can be readily deduced by using  standard properties of norms; see  \cite[Lemma \chen{2.1}] {loghin2004analysis}. 
We have
   \begin{align*}
    \| F^{-1} \|_{H_1^{-1},H_1} ^{-1} &= \min_{\vv\in \mathbb{R}^n \backslash \{\mathbf{0}\}} \max_{\ww \in \mathbb{R}^n \backslash \{\mathbf{0}\}} \frac{\ww^T F \vv}{\| \vv\|_{H_1} \| \ww\|_{H_1}} \\
    &\geq \min_{\vv\in \mathbb{R}^n \backslash \{\mathbf{0}\}} \frac{\vv^T F \vv}{\| \vv\|_{H_1}^2} \chen{=} 1,
  \end{align*}
\end{proof}

In the problems that we consider, we will assume boundedness of $B$
and a standard inf-sup condition, both of which in fact impose a condition on the choice of $H_2$.
\begin{assumption}
\label{assmp:H2}
\begin{align} \label{eq:B}
\qquad  \| B \|_{H_1, H_2^{-1}} \leq C_1, \qquad \min_\xx \frac{\| B^T \xx \|_{H_1^{-1}}}{\| \xx\|_{H_2}}\geq C_2,
    \end{align}
    where $C_1$ and $C_2$ are independent of $\ttt$ and the dimensions of \chen{$B$}.
\end{assumption}
\begin{lemma}\label{lem:B} If Assumption \ref{assmp:H2} holds, 
    then
  \[ 
  \| S^{-1} \|_{H_2^{-1}, H_2} \leq \schur.
  \] 
\end{lemma}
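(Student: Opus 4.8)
The plan is to reduce the bound to a lower bound on the generalized Rayleigh quotient of $S$ with respect to $H_2$, and then to use an algebraic identity for $v^T S v$ together with Lemma \ref{lem:F} and the inf-sup condition in Assumption \ref{assmp:H2}. First, note that since $F$ is positive real and $B$ has full row rank, $S = BF^{-1}B^T$ is symmetric positive definite. Applying result (ii) of Lemma \ref{lem:norms} with $M = S$ (and the lemma's $H_1$ and $H_2$ both taken to be our $H_2$), we get
\begin{equation*}
  \| S^{-1}\|_{H_2^{-1}, H_2}^{-1} = \min_{v \neq 0}\max_{w\neq 0} \frac{w^T S v}{\| v\|_{H_2}\| w\|_{H_2}} \;\geq\; \min_{v\neq 0} \frac{v^T S v}{\| v\|_{H_2}^2},
\end{equation*}
where the inequality comes from choosing $w = v$ (equivalently, via the identity after Definition \ref{def:H1H2}, $\| S^{-1}\|_{H_2^{-1},H_2} = \| H_2^{1/2} S^{-1} H_2^{1/2}\|_2 = 1/\lambda_{\min}(H_2^{-1/2} S H_2^{-1/2})$). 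So it suffices to show $v^T S v \geq \frac{C_2^2}{(1+\ttt)^2}\| v\|_{H_2}^2$ for all $v$.

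The key step is the identity $v^T S v = \| F^{-1}B^T v\|_{H_1}^2$. Indeed, setting $z = F^{-1}B^T v$, so that $B^T v = F z$, we compute
\begin{equation*}
  v^T S v = (B^T v)^T F^{-1}(B^T v) = (B^T v)^T z = (Fz)^T z = z^T F^T z = z^T H_1 z = \| z\|_{H_1}^2,
\end{equation*}
using $F^T = H_1 - N$ from \eqref{eq:sym_skewsym} and $z^T N z = 0$ for the skew-symmetric $N$. Now bound $\| z\|_{H_1}$ from below: from $B^T v = Fz$ and Lemma \ref{lem:F},
\begin{equation*}
  \| B^T v\|_{H_1^{-1}} = \| F z\|_{H_1^{-1}} \leq \| F\|_{H_1, H_1^{-1}}\,\| z\|_{H_1} \leq (1+\ttt)\| z\|_{H_1},
\end{equation*}
so $\| z\|_{H_1} \geq \| B^T v\|_{H_1^{-1}}/(1+\ttt)$. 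Combining this with the inf-sup bound $\| B^T v\|_{H_1^{-1}} \geq C_2 \| v\|_{H_2}$ from \eqref{eq:B} yields $v^T S v = \| z\|_{H_1}^2 \geq C_2^2 \| v\|_{H_2}^2/(1+\ttt)^2$, which together with the first paragraph gives $\| S^{-1}\|_{H_2^{-1},H_2} \leq \schur$.

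I expect the only real friction to be bookkeeping: correctly matching the weighted-norm subscripts when invoking Lemma \ref{lem:norms}(ii), and verifying that $S$ is symmetric positive definite so that the $\min$--$\max$ collapses to the symmetric Rayleigh quotient. The algebraic identity in the second paragraph and the two-line chain of inequalities are short and routine once the reduction is in place; if one prefers to avoid square roots of $H_2$, the entire argument can be phrased purely through results (i) and (ii) of Lemma \ref{lem:norms}.
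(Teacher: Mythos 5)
Your proof is correct and arrives at the same bound, but the core of the argument is genuinely different from the paper's. Both you and the paper start by invoking Lemma~\ref{lem:norms}(ii) and then dropping to the diagonal choice $w=v$ to reduce to a lower bound on $v^T S v / \|v\|_{H_2}^2$. From there the paper factors the Rayleigh quotient as
\[
\frac{v^T B F^{-1} B^T v}{v^T B H_1^{-1} B^T v}\cdot \frac{\|B^T v\|_{H_1^{-1}}^2}{\|v\|_{H_2}^2},
\]
bounds the second factor by $C_2^2$ via the inf-sup condition, and bounds the first factor through a spectral argument: it invokes Lemma~8 of Loghin--Wathen to pass to eigenvalues of the normal matrix $I + H_1^{-1/2} N H_1^{-1/2}$, computes $\min_k \mathrm{Re}(1/\lambda_k)$, and finishes via the spectral radius of $H_1^{-1/2} F H_1^{-1/2}$. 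You replace that entire eigenvalue detour with the identity $v^T S v = \|F^{-1}B^T v\|_{H_1}^2$ (proved by writing $z=F^{-1}B^T v$, so $v^TSv = z^T F^T z = z^T H_1 z$ since $z^T N z = 0$), followed by the one-line norm bound $\|B^T v\|_{H_1^{-1}} = \|Fz\|_{H_1^{-1}} \le (1+\alpha)\|z\|_{H_1}$ from Lemma~\ref{lem:F}, and the inf-sup condition. This is more elementary, self-contained, and avoids the appeal to the external Loghin--Wathen Lemma~8 and to the normality of $I + H_1^{-1/2}NH_1^{-1/2}$; it also makes transparent exactly where the factor $(1+\alpha)^2$ enters. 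One small inaccuracy in your write-up: $S=BF^{-1}B^T$ is \emph{not} symmetric when $F$ is nonsymmetric (it is merely positive real), so the parenthetical reformulation in terms of $\lambda_{\min}(H_2^{-1/2}SH_2^{-1/2})$ is not valid as stated. This does not affect the proof, since the inequality you actually use comes from picking $w=v$ in the $\max$, not from symmetry; just drop the claim that $S$ is symmetric positive definite.
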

\begin{proof} Using \eqref{eq:B} and following similar steps to the analysis of \cite{loghin2004analysis}, we have
  \begin{align*}
    \| S^{-1} \|_{H_2^{-1}, H_2}^{-1}  &= \min_{\vv\in \mathbb{R}^m \backslash \{\mathbf{0}\}} \max_{\ww \in \mathbb{R}^m \backslash \{\mathbf{0}\}} \frac{\ww^T B F^{-1} B^T \vv}{\| \vv\|_{H_2} \| \ww\|_{H_2}} \\
      &\geq \min_{\vv\in \mathbb{R}^m \backslash \{\mathbf{0}\}} \frac{\vv^T B F^{-1} B^T \vv}{\| \vv\|_{H_2}^2} \\
      &\geq  \min_{\vv\in \mathbb{R}^m \backslash \{\mathbf{0}\}} \frac{\vv^T B F^{-1} B^T \vv}{\vv^T B H_1 ^{-1} B^T \vv} \min_{\vv\in \mathbb{R}^m \backslash \{\mathbf{0}\}} \frac{\| B^T \vv\|_{H_1^{-1}}^2}{ \| \vv\|_{H_2}^2}\\
      &\geq C_2 ^2 \min_{\vv\in \mathbb{R}^m \backslash \{\mathbf{0}\}} \frac{\vv^T B F^{-1} B^T \vv}{\vv^T B H_1 ^{-1} B^T \vv} .
  \end{align*}
Using  \cite[Lemma \chen{3.4}]{loghin2004analysis} and Lemma \ref{lem:F}, we have
\begin{align*}
  \min_{\vv\in \mathbb{R}^m \backslash \{\mathbf{0}\}} \frac{\vv^T B F^{-1} B^T \vv}{\vv^T B H_1 ^{-1} B^T \vv} &\geq \min_{\yy\in \mathbb{R}^n \backslash \{\mathbf{0}\}} \frac{\yy^T (I + H_1^{-1/2}N H_1^{-1/2})^{-1} \yy}{\yy^T \yy}\\
  &= \min_k \text{Re} \left( \frac{1}{\lambda_k(I+H_1^{-1/2}N H_1^{-1/2})} \right)\\
  &= \min_k \text{Re} \left( \frac{1}
  {1+\lambda_k(H_1^{-1/2}N H_1^{-1/2})} \right)\\
  &=  \frac{1}{\max_{k} {\left|\lambda_k(I + H_1^{-1/2}N H_1^{-1/2})\right|}^2}\\
  & \geq \frac{1}{\| H_1^{-1/2} F H_1^{-1/2} \|_2^2}\\
  &\geq \frac{1}{(1+\ttt)^2},
\end{align*}
\chen{where $\lambda_k(\cdot)$ denotes the $k$th eigenvalue of the input matrix.}
\end{proof}

Finally, we establish notation that will become handy in the following subsections.
\begin{definition}
\label{defn:lesssim}
\chennew{Let $\mathcal{T}$ denote a set of matrices. For a given scalar $\tau>0$, we say that $\|T\| \lesssim \tau$ for $T \in \mathcal{T}$ if there exists some constant \chennewnew{$C>0$ independent of $T$} such that $\|T\| \le C \tau$ for all $T \in \mathcal{T}$. }
\end{definition}

\chennew{\begin{definition}
Consider saddle-point matrices $K$ defined in \eqref{eq:K}, where $F$ is nonsymmetric and positive real, $B$ has full rank, and $H_1, H_2$ and $N$ are as defined in Eq. \eqref{eq:H} and in Definition~\ref{defn:H1N}. We  define $\mathcal{K}_{\eta,C_1,C_2}$ as the set of all such saddle-point matrices that satisfy Assumptions  \ref{asmp1} and \ref{assmp:H2} for given values of $\eta, C_1,$ and $C_2$,  
\end{definition}}

\chennew{In the convergence analysis that follows, our results are  for matrices $K$ in the  $\mathcal{K}_{\eta,C_1,C_2}$ set.}

\subsection{Block-Triangular Preconditioners} \label{sec:btpc}
Let us consider two block-triangular preconditioners:
\begin{enumerate}
\item[(i)] upper block-triangular preconditioners of the form
    \begin{align}
    \label{eq:MU}
  M_U &= \begin{bmatrix}
    F & B^T \\
    0 & H_2
  \end{bmatrix},
\end{align}
with left preconditioning under the $H$-norm;
\item[(ii)] lower block-triangular preconditioners of the form
    \begin{align}
    \label{eq:ML}
  M_L &= \begin{bmatrix}
    F & 0 \\
    B & H_2
  \end{bmatrix},
\end{align}
with right preconditioning under the $H^{-1}$-norm. 
\end{enumerate}
It is well known that there are some differences in the use of left and right preconditioners. For example, in flexible GMRES it is necessary to use right preconditioning. 
The correct norm considered in GMRES for finite element discretizations is typically  $\| \cdot\|_{H^{-1}}$ \cite{arioli2001stopping}. 

Consider first the left preconditioner $M_U$.
The preconditioned matrix is  given by 
\begin{align*}
  M_U^{-1} K = \begin{bmatrix}
    I-F^{-1}B^T H_2^{-1}B & F^{-1}B^T \\
    H_2^{-1} B & 0
  \end{bmatrix} ,
\end{align*}
and its inverse, which is required in order to be able to use Lemma \ref{lem:fov_main}, is given by \begin{align*}
  (M_U^{-1} K)^{-1} = \begin{bmatrix}
I-F^{-1}B^T S^{-1} B & F^{-1}B^T S^{-1} H_2 \\ 
S^{-1} B & I - S^{-1}H_2
  \end{bmatrix} ,
\end{align*}
\chennew{where we recall from \eqref{eq:S} that $S=B F^{-1} B^T$.}
We now need to establish \eqref{cond4} and \eqref{cond1}--\eqref{cond3} in Lemma \ref{lem:fov_main}.

\begin{lemma}[Proof of condition \eqref{cond1} for $M_U$] 
\label{lem:1a}
The $H$-norm of the inverse of the preconditioned matrix associated with the preconditioner $M_U$ satisfies
  \[
  \| (M_U ^{-1} K)^{-1} \|_H \lesssim 1.
  \]
 \end{lemma}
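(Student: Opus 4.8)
The plan is to bound the $H$-norm of
$$
(M_U^{-1}K)^{-1} = \begin{bmatrix}
I-F^{-1}B^T S^{-1} B & F^{-1}B^T S^{-1} H_2 \\
S^{-1} B & I - S^{-1}H_2
\end{bmatrix}
$$
block by block. Since $H = \mathrm{diag}(H_1,H_2)$ is block-diagonal, I would use the standard fact that the $H$-norm of a $2\times 2$ block operator is controlled (up to a factor of at most $2$, or $\sqrt{2}$ with a bit more care) by the maximum of the weighted norms of its four blocks, where the $(1,1)$-block is measured in $\|\cdot\|_{H_1}$, the $(2,2)$-block in $\|\cdot\|_{H_2}$, the $(1,2)$-block in $\|\cdot\|_{H_2,H_1}$, and the $(2,1)$-block in $\|\cdot\|_{H_1,H_2}$. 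So it suffices to show each of the four blocks is $\lesssim 1$ in the appropriate weighted norm.

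The key estimates, all obtained by chaining the submultiplicativity property (iii) of Lemma~\ref{lem:norms} together with the already-established bounds, are: for the $(2,1)$-block, $\|S^{-1}B\|_{H_1,H_2} \le \|S^{-1}\|_{H_2^{-1},H_2}\|B\|_{H_1,H_2^{-1}} \le C_1(1+\ttt)^2/C_2^2$ by Lemma~\ref{lem:B} and Assumption~\ref{assmp:H2}; for the $(2,2)$-block, $\|S^{-1}H_2\|_{H_2} = \|S^{-1}\|_{H_2^{-1},H_2}\lesssim 1$ by Lemma~\ref{lem:B} (using the identity relating $\|S^{-1}H_2\|_{H_2}$ to $\|S^{-1}\|_{H_2^{-1},H_2}$ recorded after Definition~\ref{def:H1H2}), hence $\|I - S^{-1}H_2\|_{H_2}\lesssim 1$; for the $(1,2)$-block, $\|F^{-1}B^T S^{-1}H_2\|_{H_2,H_1} \le \|S^{-1}H_2\|_{H_2}\,\|B^T\|_{H_2,H_1^{-1}}\,\|F^{-1}\|_{H_1^{-1},H_1}$, where $\|B^T\|_{H_2,H_1^{-1}} = \|B\|_{H_1,H_2^{-1}} \le C_1$ by Lemma~\ref{lem:norms}(i) (the bilinear form is symmetric in the roles of $B$ and $B^T$), and $\|F^{-1}\|_{H_1^{-1},H_1}\le 1$ by Lemma~\ref{lem:F_inv}; and for the $(1,1)$-block, $\|F^{-1}B^T S^{-1}B\|_{H_1} \le \|F^{-1}\|_{H_1^{-1},H_1}\,\|B^T\|_{H_2,H_1^{-1}}\,\|S^{-1}B\|_{H_1,H_2}\lesssim 1$, so $\|I - F^{-1}B^TS^{-1}B\|_{H_1}\lesssim 1$. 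Combining the four gives the claim.

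The main obstacle — really a bookkeeping obstacle rather than a deep one — is keeping the weighted-norm indices consistent through each composition, in particular making sure the "output" weight of one factor matches the "input" weight of the next so that Lemma~\ref{lem:norms}(iii) actually applies, and correctly invoking the identities after Definition~\ref{def:H1H2} to reinterpret quantities like $\|S^{-1}H_2\|_{H_2}$, $\|B^T\|_{H_2,H_1^{-1}}$, and $\|F^{-1}B^T\|_{H_2,H_1}$ in terms of the bounds already proved. One should also note that all constants produced ($C_1$, $C_2$, $\ttt$) are independent of the dimensions of $K$ by hypothesis, so the final bound is genuinely of the form $\le C$ with $C$ dimension-independent, which is exactly what $\lesssim 1$ means here.
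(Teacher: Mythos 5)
Your argument is correct and essentially mirrors the paper's: both proofs estimate the $H$-norm of $(M_U^{-1}K)^{-1}$ blockwise in the appropriate $(H_1,H_2)$-weighted norms, chaining the submultiplicativity of Lemma~\ref{lem:norms}(iii) with Lemmas~\ref{lem:F_inv} and~\ref{lem:B} and Assumption~\ref{assmp:H2}. The only cosmetic difference is that the paper bounds the block-matrix norm by the \emph{sum} of the four block norms rather than by twice their maximum, and your parenthetical ``$\sqrt{2}$ with a bit more care'' is not actually attainable in general (the factor $2$ is sharp, as the all-ones $2\times 2$ matrix shows), but this is immaterial since you only use the factor $2$.
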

\begin{proof}
The proof follows from putting together the bounds of Lemmas \ref{lem:F}, \ref{lem:F_inv}, and \ref{lem:B}. 
\begin{align*}
  \| (M_U ^{-1} K)^{-1} \|_H &=\left\| \begin{bmatrix}
H_1^{1/2}(I-F^{-1}B^T S^{-1} B) H_1^{-1/2} & H_1^{1/2}(F^{-1}B^T S^{-1} H_2)H_2^{-1/2} \\
H_2^{1/2}(S^{-1} B)H_1^{-1/2} & H_2^{1/2}(I - S^{-1}H_2)H_2^{-1/2}
\end{bmatrix} \right\|_2 \\
&\leq \| I - F^{-1}B^T S^{-1} B\|_{H_1} + \| F^{-1}B^T S^{-1} H_2 \|_{H_2, H_1} + \| S^{-1} B\|_{H_1, H_2} + \| I - S^{-1}H_2\|_{H_2} \\
&\leq (1+2C_1^2 \schur ) + (\schur C_1) + 1 + \schur \\
& \lesssim 1,
\end{align*}
\chen{where in the transition from the second to the third inequality we have used
$$\| I - F^{-1}B^T S^{-1} B\|_{H_1} \leq 1 +\|F^{-1}\|_{{H_1^{-1},H_1}} \|B^T\|_{H_2,H_1^{-1}} \|S^{-1}\|_{H_2^{-1}, H_2} \| B\|_{H_1, H_2^{-1}}, $$ and similarly for the other terms.}
\end{proof}

\begin{lemma}[Proof of condition \eqref{cond2} for $M_U$] The $H$-norm of the preconditioned matrix associated with the preconditioner $M_U$ satisfies
\label{lem:1b}
 \begin{align*}
  \| M_U^{-1}K \|_{H} \lesssim 1.
 \end{align*}
 \end{lemma}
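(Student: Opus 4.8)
The statement is $\|M_U^{-1}K\|_H \lesssim 1$, and the proof mirrors the block-by-block estimate used in Lemma \ref{lem:1a}, but is simpler because the preconditioned matrix
$$
M_U^{-1}K = \begin{bmatrix} I - F^{-1}B^T H_2^{-1}B & F^{-1}B^T \\ H_2^{-1}B & 0 \end{bmatrix}
$$
has a zero $(2,2)$ block. First I would write $\|M_U^{-1}K\|_H = \|H^{1/2}(M_U^{-1}K)H^{-1/2}\|_2$, expand the symmetrically scaled matrix into its four blocks $H_1^{1/2}(I-F^{-1}B^T H_2^{-1}B)H_1^{-1/2}$, $H_1^{1/2}(F^{-1}B^T)H_2^{-1/2}$, $H_2^{1/2}(H_2^{-1}B)H_1^{-1/2}$, and $0$, and bound the $2$-norm of the whole matrix by the sum of the $2$-norms of the blocks (exactly as in Lemma \ref{lem:1a}). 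Using the identities between the scaled $2$-norms and the weighted norms, this gives
$$
\|M_U^{-1}K\|_H \;\le\; \| I - F^{-1}B^T H_2^{-1}B\|_{H_1} + \| F^{-1}B^T\|_{H_2,H_1} + \| H_2^{-1}B\|_{H_1,H_2}.
$$

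\textbf{Bounding the three terms.} The ingredients are already available: $\|F^{-1}\|_{H_1^{-1},H_1}\le 1$ from Lemma \ref{lem:F_inv}, and $\|B\|_{H_1,H_2^{-1}}\le C_1$ from Assumption \ref{assmp:H2}. From the norm identity listed after Definition \ref{def:H1H2} we get $\|H_2^{-1}B\|_{H_1,H_2} = \|B\|_{H_1,H_2^{-1}}\le C_1$, which handles the last term. For the middle term, I would use the symmetry of the mixed norm under transposition (immediate from Lemma \ref{lem:norms}(i), since the characterization $\max_{v,w} w^T M v/(\|v\|_{H_1}\|w\|_{H_2})$ is invariant under $M\mapsto M^T$ with $H_1,H_2$ swapped), giving $\|B^T\|_{H_2,H_1^{-1}} = \|B\|_{H_1,H_2^{-1}}\le C_1$, and then Lemma \ref{lem:norms}(iii) to chain $B^T\colon (H_2)\to(H_1^{-1})$ with $F^{-1}\colon (H_1^{-1})\to(H_1)$:
$$
\|F^{-1}B^T\|_{H_2,H_1} \le \|B^T\|_{H_2,H_1^{-1}}\,\|F^{-1}\|_{H_1^{-1},H_1} \le C_1.
$$
For the first term, $\| I - F^{-1}B^T H_2^{-1}B\|_{H_1} \le 1 + \|F^{-1}B^T H_2^{-1}B\|_{H_1}$, and chaining $H_2^{-1}B\colon (H_1)\to(H_2)$, then $B^T\colon (H_2)\to(H_1^{-1})$, then $F^{-1}\colon(H_1^{-1})\to(H_1)$ via Lemma \ref{lem:norms}(iii) yields $\|F^{-1}B^T H_2^{-1}B\|_{H_1}\le C_1\cdot C_1\cdot 1 = C_1^2$.

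\textbf{Conclusion.} Adding up, $\|M_U^{-1}K\|_H \le (1+C_1^2) + C_1 + C_1 = (1+C_1)^2$, a bound independent of $n$ and $m$, hence $\|M_U^{-1}K\|_H \lesssim 1$. I expect no genuine obstacle here; the only thing requiring care is the weighted-norm bookkeeping — correctly matching the SPD weight on the domain of one factor with the weight on the codomain of the next when applying Lemma \ref{lem:norms}(iii), and invoking the transpose-symmetry of the mixed $(H_1,H_2^{-1})$-norm to pass from $B$ to $B^T$. Note that, unlike Lemma \ref{lem:1a}, this estimate uses neither Lemma \ref{lem:F} nor Lemma \ref{lem:B}.
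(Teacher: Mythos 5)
Your proof is correct and uses the same block-by-block decomposition and the same ingredients (Lemma \ref{lem:F_inv}, Lemma \ref{lem:norms}, and Assumption \ref{assmp:H2}) as the paper's proof of Lemma \ref{lem:1b}. One small point worth flagging: your bound $\|I - F^{-1}B^T H_2^{-1}B\|_{H_1}\le 1+C_1^2$ is cleaner than the paper's stated $1+\bigl((1+\alpha)^2/C_2^2\bigr)C_1^2$, and you are right that no Schur complement appears in $M_U^{-1}K$, so Lemma \ref{lem:B} is not needed here --- the extra Schur factor in the paper's line appears to be a carryover from the analogous bound in Lemma \ref{lem:1a}, where $S^{-1}$ does occur; since both constants are dimension-independent, the $\lesssim 1$ conclusion is unaffected.
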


\begin{proof} Similarly to the proof of Lemma~\ref{lem:1a},
  \begin{align*}
    \| M_U^{-1}K \|_{H} &= \left\| \begin{bmatrix}
   H_1^{1/2}(I - F^{-1}B^T H_2^{-1}B)H_1^{-1/2} & H_1^{1/2}F^{-1}B^T H_2^{-1/2} \\
    H_2^{1/2}H_2^{-1}B H_1^{-1/2} & 0
      \end{bmatrix} \right\|_2 \\
      &\leq \| I - F^{-1}B^T H_2^{-1}B\|_{H_1} + \| F^{-1}B^T \|_{H_2, H_1} + \| B\|_{H_1, H_2^{-1}} \\
      &\leq (1 + \schur C_1^2 ) + 2C_1\\
      &\lesssim 1.
  \end{align*}
\end{proof}

\begin{lemma}[Proof of condition \eqref{cond3} for $M_U$]
\label{lem:1c}
When $\ttt < \frac{1}{2}$, we have
  \begin{align}
    \left\|H(M_U^{-1}K) - (M_U^{-1}K)^T H\right\|_{H, H^{-1}} \lesssim \ttt.
    \label{eq:2c}
  \end{align}
\end{lemma}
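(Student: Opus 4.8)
The plan is to form the skew‑symmetric matrix $H(M_U^{-1}K)-(M_U^{-1}K)^TH$ explicitly in block form, use the splitting $F=H_1+N$ from \eqref{eq:sym_skewsym} to cancel all the terms that do not contain $N$, and then bound what remains by the submultiplicativity of weighted norms (Lemma~\ref{lem:norms}(iii)) together with Lemma~\ref{lem:F_inv} and Assumption~\ref{assmp:H2}.

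First I would set $A:=M_U^{-1}K$ and, using \eqref{eq:H} and \eqref{eq:MU}, compute $HA-A^TH=\begin{bmatrix}D_{11}&D_{12}\\ D_{21}&0\end{bmatrix}$. Since $H$ is symmetric this matrix is skew‑symmetric, so $D_{11}$ is skew‑symmetric and $D_{21}=-D_{12}^{T}$; hence only $D_{11}$ and $D_{12}$ need to be estimated, and $\|D_{21}\|_{H_1,H_2^{-1}}=\|D_{12}\|_{H_2,H_1^{-1}}$. A direct computation gives $D_{12}=H_1F^{-1}B^T-B^T=(H_1F^{-1}-I)B^T$ and $D_{11}=B^TH_2^{-1}BF^{-T}H_1-H_1F^{-1}B^TH_2^{-1}B$.

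The crucial step is the algebraic simplification. Using $H_1F^{-1}=I-NF^{-1}$ and $F^{-T}H_1=I+F^{-T}N$ (both immediate from $F=H_1+N$ and $N^T=-N$), the $I$‑ and $H_1$‑contributions cancel, leaving
\begin{equation*}
  D_{12}=-NF^{-1}B^T,\qquad D_{11}=NF^{-1}B^TH_2^{-1}B+B^TH_2^{-1}BF^{-T}N .
\end{equation*}
Every surviving term now carries a factor $N$, which is exactly what produces the $\ttt$ on the right‑hand side of \eqref{eq:2c}. From the identity following Definition~\ref{def:H1H2}, $\|HA-A^TH\|_{H,H^{-1}}=\|H^{-1/2}(HA-A^TH)H^{-1/2}\|_2$, and bounding the $2$‑norm of a block matrix by the sum of the $2$‑norms of its blocks gives $\|HA-A^TH\|_{H,H^{-1}}\le \|D_{11}\|_{H_1,H_1^{-1}}+2\|D_{12}\|_{H_2,H_1^{-1}}$. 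For $D_{12}$, Lemma~\ref{lem:norms}(iii) yields $\|NF^{-1}B^T\|_{H_2,H_1^{-1}}\le \|F^{-1}B^T\|_{H_2,H_1}\,\|N\|_{H_1,H_1^{-1}}\le \ttt\,\|B^T\|_{H_2,H_1^{-1}}\|F^{-1}\|_{H_1^{-1},H_1}\le \ttt C_1$, using Lemma~\ref{lem:F_inv}, \eqref{req:1}, and $\|B^T\|_{H_2,H_1^{-1}}=\|B\|_{H_1,H_2^{-1}}\le C_1$ from \eqref{eq:B}. For $D_{11}$ the two summands are transposes of one another up to sign, so it suffices to estimate $\|NF^{-1}B^TH_2^{-1}B\|_{H_1,H_1^{-1}}\le \ttt\,\|F^{-1}B^TH_2^{-1}B\|_{H_1,H_1}\le \ttt\,\|B^TH_2^{-1}B\|_{H_1,H_1^{-1}}\le \ttt\,\|H_2^{-1}B\|_{H_1,H_2}\|B^T\|_{H_2,H_1^{-1}}\le \ttt C_1^{2}$, again via Lemma~\ref{lem:norms}(iii), Lemma~\ref{lem:F_inv}, and the conversion $\|H_2^{-1}B\|_{H_1,H_2}=\|B\|_{H_1,H_2^{-1}}$. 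Altogether $\|HA-A^TH\|_{H,H^{-1}}\le 2\ttt C_1^{2}+2\ttt C_1\lesssim \ttt$.

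The main obstacle is the bookkeeping in the last step: each composite product must be split so that the intermediate weighted norms appearing in Lemma~\ref{lem:norms}(iii) telescope correctly, and the $H_2^{-1}\!\leftrightarrow\!H_2$ conversions (from the identities following Definition~\ref{def:H1H2}) have to be inserted in the right places; a careless split either breaks the telescoping or loses the essential factor $\|N\|_{H_1,H_1^{-1}}\le\ttt$. The hypothesis $\ttt<\tfrac12$ is not actually needed for this estimate beyond keeping the standing assumptions consistent; it is condition \eqref{cond4} of Lemma~\ref{lem:fov_main}, with $b\lesssim 1$ from Lemmas~\ref{lem:1a}--\ref{lem:1b} and $c\lesssim\ttt$ from this lemma, that ultimately forces $\ttt$ to be small.
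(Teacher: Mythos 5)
Your proof is correct and follows the same overall structure as the paper's: compute the block form of $H(M_U^{-1}K)-(M_U^{-1}K)^T H$, reduce to estimating $\|D_{11}\|_{H_1,H_1^{-1}}$ and $\|D_{12}\|_{H_2,H_1^{-1}}$, and use the splitting $F=H_1+N$ together with Lemma~\ref{lem:norms}(iii), Lemma~\ref{lem:F_inv}, and Assumption~\ref{assmp:H2}. Your treatment of $D_{11}$ is identical to the paper's (both obtain $\|D_{11}\|_{H_1,H_1^{-1}}\le 2C_1^2\ttt$). The one genuine difference is the off-diagonal block: the paper writes $B_{12}=(H_1F^{-1}-I)B^T$, pulls out $\|H_1F^{-1}-I\|_{H_1^{-1}}$, and then bounds it by $\ttt/(1-\ttt)$ via a Neumann-series estimate on $(I+H_1^{-1/2}NH_1^{-1/2})^{-1}-I$; the restriction $\ttt<\tfrac12$ enters precisely to replace $\ttt/(1-\ttt)$ by $2\ttt$. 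You instead apply the same exact cancellation used for $D_{11}$ to get $D_{12}=-NF^{-1}B^T$ and bound it directly by $C_1\ttt$. This is a small but real improvement: it gives a sharper constant ($C_1\ttt$ versus $2C_1\ttt$) and, as you note, dispenses with the hypothesis $\ttt<\tfrac12$ altogether for this lemma. Your remark that the hypothesis only ends up mattering through condition~\eqref{cond4} in Lemma~\ref{lem:fov_main} is accurate.
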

\begin{proof}
  We have \begin{equation}\label{eq:inv_ineq} 
    \begin{aligned}
    \left\|H(M_U^{-1}K) - (M_U^{-1}K)^T H\right\|_{H, H^{-1}} &= 
    \left\| \begin{bmatrix}
      B_{11} & B_{12} \\ 
      -B_{12}^T & 0
         \end{bmatrix} \right\|_2 \\
         &\leq \| B_{11} \|_2 + 2\| B_{12} \|_2 ,
        \end{aligned}
  \end{equation}
  where
  $$
  B_{11 }= -H_{1}^{1/2}F^{-1}B^T H_2^{-1} B H_1^{-1/2} + H_1^{-1/2}B^T H_2^{-1}B F^{-T}H_1^{1/2}.
  $$
  $$
  B_{12} = H_1^{1/2}F^{-1}B^T H_2^{-1/2} - H_1^{-1/2} B^T H_2^{-1/2}
  $$
  \begin{align*}
    \| B_{12} \|_2 &= \| (H_1 F^{-1} -I )B^T \|_{H_2, H_1^{-1}} \\
    &\leq C_1 \| H_1 F^{-1} -I \|_{H_1^{-1}} \\
    &= C_1 \| H_1^{1/2} (H_1 +  N)^{-1} H_1^{1/2} - I \|_2\\
    &= C_1 \|  (I + H_1^{-1/2}N H_1^{-1/2})^{-1}  - I \|_2.
  \end{align*}
  When $\ttt < \frac{1}{2}$ we have 
  \begin{equation}\label{eq:B_12}
    \| B_{12} \|_2 \leq C_1 \frac{ \| N\|_{H_1,H_1^{-1}} }{1 -  \| N\|_{H_1,H_1^{-1}} } \leq C_1 \ttt / (1-\ttt) \leq 2C_1 \ttt \lesssim \ttt
  \end{equation}
and
  \begin{align*}
    \| B_{11} \| &= \|H_{1}F^{-1}B^T H_2^{-1} B- B^T H_2^{-1}B F^{-T}H_1\|_{H_1,H_1^{-1}} \\
    &= \| (F -  N) F^{-1}B^T H_2^{-1} B- B^T H_2^{-1}B F^{-T}(F^T -  N^T) \|_{H_1,H_1^{-1}} \\
    &=\| -  NF^{-1}B^T H_2^{-1} B +  B^T H_2^{-1} B F^{-T} N^T \|_{H_1,H_1^{-1}} \\
    &\leq  \|NF^{-1}B^T H_2^{-1} B\|_{H_1,H_1^{-1}} + \| B^T H_2^{-1} B F^{-T} N^T \|_{H_1,H_1^{-1}} \\
    &\leq 2C_1^2 \ttt  \\
    &\lesssim \ttt.
  \end{align*}
Substituting  the above inequalities into \eqref{eq:inv_ineq}, we obtain \eqref{eq:2c}, as required.
\end{proof}

\chen{In order to be able to state an upcoming convergence theorem in precise terms, it is useful to rephrase the results of Lemmas \ref{lem:1a}--\ref{lem:1c} using some specific constants. 
\chennew{Suppose 
$K \in \mathcal{K}_{\eta,C_1,C_2}$ for some particular values of $\ttt < \frac12$, $C_1$, and $C_2$. Then Lemmas \ref{lem:1a}--\ref{lem:1c}}
say that there exist constants $C_a^U, C_b^U, C_c^U>0$ \chennewnew{independent of the matrix dimensions} such that
 \begin{equation}
\begin{aligned}
  &\| M_U ^{-1} K \|_H \le C_a^U,\quad  \| (M_U ^{-1} K)^{-1} \|_H \le C_b^U,\\
  &\left\|H(M_U^{-1}K) - (M_U^{-1}K)^T H\right\|_{H, H^{-1}} \le C_c^U\,\ttt.
\end{aligned}
\label{eq:CaCbCc}
\end{equation}}
   
\begin{theorem}  \label{thm:leftpc}
Given \chennew{ a tolerance \chennewnew{$\epsilon >0$} and a saddle-point system with matrix $K \in 
\mathcal{K}_{\eta,C_1,C_2}$ for some particular values of $\ttt < \frac12, C_1$, and $C_2$, } let $H_1$ and $N$ be the symmetric and skew-symmetric parts, respectively, of $F$, as in \eqref{eq:sym_skewsym}.
 Let $H_2$ be a symmetric positive definite matrix. 
Finally, let $H$ be the block-diagonal matrix defined in \eqref{eq:H}.
Then, using the constants defined in \eqref{eq:CaCbCc}, if $C_b^U C_c^U\,\ttt < 1$ (i.e., \eqref{cond4} holds), then for any system that satisfies these requirements with  $\ttt$, $C_1$, $C_2$, $C_a^U$, $C_b^U$,  and $C_c^U$, there exists some integer $m\ge 1$ that depends on their values \chennew{and on $\epsilon$}, such that preconditioned GMRES with $M_U$ as a left preconditioner  will converge (in the $H$-norm) \chennew{to $\epsilon$} in no more than $m$ iterations. 
\end{theorem}
\begin{proof}
Trivially, by Lemmas \ref{lem:1b} and \ref{lem:1c}, \eqref{cond4} holds when $\ttt$ is sufficiently small. Lemmas \ref{lem:1a}--\ref{lem:1c} validate conditions \eqref{cond1}--\eqref{cond3}.
\end{proof}

We now consider the right preconditioner $M_L$ defined in \eqref{eq:ML}.
The analysis is very similar to the left preconditioner case. 
\chen{We first define three inequalities analogous to \eqref{eq:CaCbCc}:
 \begin{equation}
\begin{aligned}
  &\| K M_L^{-1} \|_{H^{-1}} \le C_a^L,\quad  \| (K M_L ^{-1} )^{-1} \|_{H^{-1}} \le C_b^L,\\
  &\left\|H^{-1} (K M_L^{-1}) - (K M_L^{-1})^T H^{-1}\right\|_{H^{-1},  H} \le C_c^L\,\ttt.
\end{aligned}
\label{eq:CaCbCcL}
\end{equation}
 We now present a theorem analogous to Theorem \ref{thm:leftpc}.}
\begin{theorem}
  \label{thm:rightpc}
Given \chennew{ a tolerance \chennewnew{$\epsilon>0$} and a saddle-point system with matrix $K \in 
\mathcal{K}_{\eta,C_1,C_2}$ for some particular values of $\ttt <\frac12, C_1$, and $C_2$, } let $H_1$ and $N$ be the symmetric and skew-symmetric parts, respectively, of $F$, as in \eqref{eq:sym_skewsym}.
 Let $H_2$ be a symmetric positive definite matrix.
Let $H$ be the block-diagonal matrix defined in \eqref{eq:H}.
\chen{Finally, assume we have the  constants and inequalities defined in \eqref{eq:CaCbCcL}. Then,  if $C_b^L C_c^L\,\ttt < 1$ (i.e., \eqref{cond4} holds), then for any system that satisfies these requirements with  $\ttt$, $C_1$, $C_2$, $C_a^L$, $C_b^L$, and $C_c^L$, there exists some integer $m\ge 1$ that depends on their values \chennew{and on $\epsilon$}, such that preconditioned GMRES with $M_L$ as a right preconditioner  will converge (in the $H^{-1}$-norm) \chennew{to $\epsilon$} in no more than $m$ iterations. }
\end{theorem}

\begin{remark}
  In practice, $H$ can be replaced with another symmetric positive definite matrix $\tilde{H}$ and results will still hold if $H$ and $\tilde{H}$ are spectrally equivalent: GMRES convergence with $H$-norm can induce GMRES convergence with $\tilde{H}$-norm. This is because 
  $$\|p(A) \|_{H} =  \| H^{1/2} (\tilde{H}^{-1/2} \tilde{H}^{1/2}) p(A) (\tilde{H}^{-1/2} \tilde{H}^{1/2}) H^{-1/2} \|_2 \leq \kappa_2( H^{1/2} \tilde{H}^{-1/2} ) \| p(A)\|_{\tilde{H}}.$$
\end{remark}

\subsection{A Block-Diagonal Preconditioner} 

The case of a block diagonal preconditioner of the form 
\begin{align}
\label{eq:MD}
  M_D &= \begin{bmatrix}
    F & 0 \\
    0 & H_2
  \end{bmatrix}
\end{align}
is interesting in the context of this work, because  contrary to block-triangular preconditioners, where one might select either an upper block-triangular preconditioner or a lower block-triangular preconditioner along with right or left preconditioning to avoid a situation of having zero in the field of values, here it is immediate that the field of values contains zero regardless of any such choices made. There is no practical difference between left and right preconditioning here, and we proceed with left preconditioning below.
The preconditioned matrix is
\begin{align*}
  M_D^{-1} K = \begin{bmatrix}
    I & F^{-1}B^T \\
    H_2^{-1} B & 0
  \end{bmatrix} ,
\end{align*}
and its inverse is
\begin{align*}
  (M_D^{-1} K)^{-1} = \begin{bmatrix}
    I - F^{-1}B^T S^{-1} B & F^{-1}B^T S^{-1}H_2 \\
    S^{-1} B & -S^{-1} H_2
  \end{bmatrix} .
\end{align*}
The analysis is essentially identical to that 
of Section~\ref{sec:btpc}.

\begin{lemma}[Proof of condition \eqref{cond1} for $M_D$]
The $H$-norm of the inverse of the preconditioned matrix associated with the preconditioner $M_D$ satisfies
 \label{lem:MD_property1}
  \[
  \| (M_D^{-1} K)^{-1} \|_H \lesssim 1.
  \]
\end{lemma}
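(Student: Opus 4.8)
The plan is to bound $\| (M_D^{-1} K)^{-1} \|_H$ by congruence to the 2-norm and then estimate the four blocks using the lemmas already proved. Writing $H = \operatorname{diag}(H_1, H_2)$, I would factor
\[
  \| (M_D^{-1} K)^{-1} \|_H = \left\| \begin{bmatrix}
    H_1^{1/2}(I - F^{-1}B^T S^{-1} B)H_1^{-1/2} & H_1^{1/2}F^{-1}B^T S^{-1}H_2 H_2^{-1/2} \\
    H_2^{1/2}S^{-1} B H_1^{-1/2} & -H_2^{1/2}S^{-1} H_2 H_2^{-1/2}
  \end{bmatrix} \right\|_2,
\]
and then use the elementary fact that the 2-norm of a $2\times 2$ block matrix is at most the sum of the (2-norms of the) blocks. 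This reduces the task to bounding four weighted-norm quantities: $\| I - F^{-1}B^T S^{-1} B\|_{H_1}$, $\| F^{-1}B^T S^{-1} H_2\|_{H_2, H_1}$, $\| S^{-1} B\|_{H_1, H_2}$, and $\| S^{-1} H_2\|_{H_2}$.

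Each of these is handled exactly as in the proof of Lemma~\ref{lem:1a}. For the off-diagonal and $(2,2)$ blocks I would apply the submultiplicativity of weighted norms (Lemma~\ref{lem:norms}(iii)) together with Lemma~\ref{lem:F_inv} ($\| F^{-1}\|_{H_1^{-1},H_1} \le 1$), Lemma~\ref{lem:B} ($\| S^{-1}\|_{H_2^{-1},H_2} \le (1+\ttt)^2/C_2^2$), and Assumption~\ref{assmp:H2} ($\| B\|_{H_1,H_2^{-1}} \le C_1$, which also gives $\| B^T\|_{H_2,H_1^{-1}} \le C_1$). For instance $\| S^{-1}B\|_{H_1,H_2} \le \| B\|_{H_1,H_2^{-1}} \| S^{-1}\|_{H_2^{-1},H_2} \le C_1 (1+\ttt)^2/C_2^2$, $\| S^{-1}H_2\|_{H_2} = \| S^{-1}\|_{H_2^{-1},H_2} \le (1+\ttt)^2/C_2^2$, and $\| F^{-1}B^T S^{-1}H_2\|_{H_2,H_1} \le \| S^{-1}\|_{H_2^{-1},H_2}\| B^T\|_{H_2,H_1^{-1}}\| F^{-1}\|_{H_1^{-1},H_1} \le (1+\ttt)^2 C_1/C_2^2$. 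For the $(1,1)$ block, $\| I - F^{-1}B^T S^{-1}B\|_{H_1} \le 1 + \| F^{-1}B^T\|_{H_2,H_1}\| S^{-1}B\|_{H_1,H_2} \le 1 + C_1 \cdot C_1(1+\ttt)^2/C_2^2$. Summing the four bounds gives a constant depending only on $C_1$, $C_2$, and $\ttt$, hence $\| (M_D^{-1}K)^{-1}\|_H \lesssim 1$ since $\ttt$ is treated as a fixed (small) constant.

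The only mild subtlety — and the one place where care is needed — is the decomposition $I - F^{-1}B^T S^{-1}B$ in the $(1,1)$ block: one must verify algebraically that $(M_D^{-1}K)^{-1}$ indeed has this form (i.e.\ that $(M_D^{-1}K)(M_D^{-1}K)^{-1} = I$ using $S = BF^{-1}B^T$), which the excerpt already asserts, so I would simply take it as given. Everything else is a routine reprise of the estimates in Section~\ref{sec:btpc}, with no new obstacle; the result is essentially the same chain of inequalities as in Lemma~\ref{lem:1a} applied to a matrix with the same block structure.
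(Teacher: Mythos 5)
Your proof is correct and follows essentially the same route as the paper: congruence to the $2$-norm, triangle inequality over the four blocks, and then the same chain of submultiplicativity estimates from Lemmas~\ref{lem:F}, \ref{lem:F_inv}, \ref{lem:B} and Assumption~\ref{assmp:H2}, mirroring Lemma~\ref{lem:1a}. (Your bound $\|S^{-1}B\|_{H_1,H_2}\le C_1(1+\alpha)^2/C_2^2$ is the careful one via $\|B\|_{H_1,H_2^{-1}}\|S^{-1}\|_{H_2^{-1},H_2}$; the paper records a looser-looking $C_1$ there, but since both are $\lesssim 1$ the conclusion is unchanged.)
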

\begin{proof}
  The proof follows similar steps as for \( M_U \) in Lemma \ref{lem:1a}. We need to bound the norm of each block in the inverse, and we apply the bounds obtained in Lemmas \ref{lem:F}, \ref{lem:F_inv}, and \ref{lem:B}:
  \begin{align*}
    \| I - F^{-1}B^T S^{-1} B \|_{H_1} &\leq 1 + C_1^2 \schur, \\
    \| F^{-1}B^T S^{-1}H_2 \|_{H_2, H_1} &\leq \schur C_1, \\
    \| S^{-1} B \|_{H_1, H_2} &\leq C_1, \\
    \| S^{-1} H_2 \|_{H_2} &\leq \schur.
  \end{align*}
  Combining these, we get the bound for the entire matrix.
\end{proof}

\begin{lemma}[Proof of condition \eqref{cond2} for $M_D$]
The $H$-norm of the preconditioned matrix associated with the preconditioner $M_D$ satisfies 
  \label{lem:MD_property2}
  \[
  \| M_D^{-1} K \|_H \lesssim 1.
  \]
\end{lemma}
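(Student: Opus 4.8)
The plan is to follow almost verbatim the argument used for $M_U$ in Lemma~\ref{lem:1b}, since $M_D^{-1}K$ is structurally simpler: its $(1,1)$-block is the identity rather than $I - F^{-1}B^T H_2^{-1}B$. First I would symmetrically scale the block matrix by $H^{1/2}$ on the left and $H^{-1/2}$ on the right, so that
\begin{align*}
  \| M_D^{-1}K \|_H = \left\| \begin{bmatrix} I & H_1^{1/2} F^{-1} B^T H_2^{-1/2} \\ H_2^{-1/2} B H_1^{-1/2} & 0 \end{bmatrix} \right\|_2,
\end{align*}
and then bound the right-hand side by the sum of the Euclidean norms of the three nonzero blocks, i.e.
\begin{align*}
  \| M_D^{-1}K \|_H \le \| I \|_{H_1} + \| F^{-1}B^T \|_{H_2, H_1} + \| B \|_{H_1, H_2^{-1}}.
\end{align*}

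The first term equals $1$, and the third is at most $C_1$ by Assumption~\ref{assmp:H2}. For the middle term I would invoke the submultiplicativity property, Lemma~\ref{lem:norms}(iii), with the intermediate weight chosen to be $H_1^{-1}$, which gives $\| F^{-1}B^T \|_{H_2, H_1} \le \| B^T \|_{H_2, H_1^{-1}}\, \| F^{-1} \|_{H_1^{-1}, H_1}$. The second factor is bounded by $1$ via Lemma~\ref{lem:F_inv}, and the first factor equals $\| B \|_{H_1, H_2^{-1}} \le C_1$ by the bilinear characterization of Lemma~\ref{lem:norms}(i), which is symmetric in the roles of $B$ and $B^T$. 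Putting these together yields $\| M_D^{-1}K \|_H \le 1 + 2 C_1$, and since $C_1$ is independent of the dimensions of $K$, the claimed bound $\lesssim 1$ follows.

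There is no genuine obstacle here; the computation is routine and strictly easier than for the block-triangular preconditioners already treated. The only points requiring a little care are the bookkeeping of which weight matrix plays the role of the ``input'' and which the ``output'' when applying the identities listed after Definition~\ref{def:H1H2} and the submultiplicativity of Lemma~\ref{lem:norms}(iii), together with the transpose identity $\| B^T \|_{H_2, H_1^{-1}} = \| B \|_{H_1, H_2^{-1}}$. Everything else parallels the proof of Lemma~\ref{lem:1b}.
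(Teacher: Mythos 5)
Your proposal is correct and follows essentially the same route as the paper's proof: both reduce $\| M_D^{-1}K \|_H$ to the Euclidean norm of a block $2\times 2$ matrix obtained by conjugating with $H^{\pm 1/2}$, then bound it by the sum of the block norms $\| I \|_{H_1} = 1$, $\| F^{-1}B^T \|_{H_2,H_1} \le C_1$, and $\| H_2^{-1}B \|_{H_1,H_2} = \| B \|_{H_1,H_2^{-1}} \le C_1$, referencing the parallel argument for $M_U$ in Lemma~\ref{lem:1b}. You simply spell out the submultiplicativity and transpose steps that the paper leaves implicit.
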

\begin{proof}
  Similar to the analysis for \( M_U \) in Lemma \ref{lem:1b}, we bound the norm of each block in the preconditioned matrix:
  \begin{align*}
    \| I \|_{H_1} &= 1, \\
    \| F^{-1}B^T \|_{H_2, H_1} &\leq C_1, \\
    \| H_2^{-1} B \|_{H_1, H_2} &\leq C_1.
  \end{align*}
  Thus, the norm of the entire matrix is bounded by the sum of these norms.
\end{proof}

\begin{lemma}[Proof of condition \eqref{cond3} for $M_D$]
  \label{lem:M2_property3} When $\ttt < \frac{1}{2}$, we have
  \[
  \left\| H(\chen{M_D}^{-1} K) - (\chen{M_D}^{-1} K)^T H \right\|_{H, H^{-1}} \lesssim \ttt.
  \]
\end{lemma}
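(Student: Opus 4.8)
The plan is to recognize that this lemma is a strict simplification of Lemma~\ref{lem:1c}: replacing the upper block-triangular $M_U$ by the block-diagonal $M_D$ makes the $(1,1)$ block of the preconditioned matrix equal to the identity, which kills the term $B_{11}$ that was responsible for the only genuine difficulty in the block-triangular case. So I would first form the matrix $H(M_D^{-1}K) - (M_D^{-1}K)^T H$ block by block, using $H = \operatorname{diag}(H_1,H_2)$ and the explicit form of $M_D^{-1}K$. Since the $(1,1)$ block of $M_D^{-1}K$ is $I$, its contribution to the difference is $H_1 - H_1 = 0$; the $(2,2)$ block is trivially zero; and a short computation gives the $(1,2)$ block $(H_1F^{-1}-I)B^T$ and the $(2,1)$ block $B(I-F^{-T}H_1)$, which is its negative transpose. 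Hence
\[
H(M_D^{-1}K) - (M_D^{-1}K)^T H = \begin{bmatrix} 0 & (H_1F^{-1}-I)B^T \\ B(I-F^{-T}H_1) & 0 \end{bmatrix}.
\]

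Next I would pass to the weighted operator norm. By definition $\|\cdot\|_{H,H^{-1}} = \|H^{-1/2}(\cdot)H^{-1/2}\|_2$, and since $H^{-1/2} = \operatorname{diag}(H_1^{-1/2},H_2^{-1/2})$, conjugating the matrix above turns it into the block-skew-symmetric matrix $\left[\begin{smallmatrix} 0 & B_{12} \\ -B_{12}^T & 0 \end{smallmatrix}\right]$, where $B_{12} = H_1^{1/2}F^{-1}B^T H_2^{-1/2} - H_1^{-1/2}B^T H_2^{-1/2}$ is \emph{exactly} the block that already appears in the proof of Lemma~\ref{lem:1c}. The $2$-norm of a matrix of the form $\left[\begin{smallmatrix} 0 & X \\ -X^T & 0 \end{smallmatrix}\right]$ is $\|X\|_2$ (its Gram matrix is block-diagonal with blocks $XX^T$ and $X^TX$), so the quantity to be bounded reduces to $\|B_{12}\|_2$.

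Finally I would invoke the estimate already established in \eqref{eq:B_12}: when $\alpha < \tfrac12$, one has $\|B_{12}\|_2 \le C_1\alpha/(1-\alpha) \le 2C_1\alpha \lesssim \alpha$, which is precisely the claimed bound. (The statement as typeset refers to $M_2$; this should read $M_D$.)

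I do not anticipate a real obstacle here — the whole point of the lemma is that the block-diagonal preconditioner drops the $B_{11}$ contribution, so the $\mathcal{O}(\alpha)$ skew-symmetric part comes solely from the inexact solve with $F$, captured by $B_{12}$. The only points needing care are the transpose bookkeeping, so that the two off-diagonal blocks really are negatives of one another after the symmetric $H^{-1/2}$-conjugation, and the reduction of the weighted operator norm of a $2\times 2$ block-skew matrix to the $2$-norm of its single nonzero block; both are routine.
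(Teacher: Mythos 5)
Your proof is correct and follows essentially the same route as the paper: conjugate by $H^{-1/2}$ to identify the quantity as the block-skew matrix with single off-diagonal block $B_{12} = H_1^{1/2}F^{-1}B^TH_2^{-1/2}-H_1^{-1/2}B^TH_2^{-1/2}$, note the diagonal blocks vanish because the $(1,1)$ block of $M_D^{-1}K$ is $I$, and then reuse \eqref{eq:B_12}. The only (minor) difference is that you observe the $2$-norm of $\left[\begin{smallmatrix} 0 & B_{12} \\ -B_{12}^T & 0\end{smallmatrix}\right]$ equals $\|B_{12}\|_2$ exactly, whereas the paper uses the cruder bound $\le 2\|B_{12}\|_2$; both suffice for the $\lesssim\alpha$ claim, and you are also right that ``$M_2$'' in the statement is a typo for $M_D$.
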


\begin{proof}
  Note that
  \begin{equation*}
    \begin{aligned}
    \left\|H(M_D^{-1}K) - (M_D^{-1}K)^T H\right\|_{H, H^{-1}} &= 
    \left\| \begin{bmatrix}
      0& B_{12} \\ 
      -B_{12}^T & 0
         \end{bmatrix} \right\|_2 \\
         &\leq 2\| B_{12} \|_2 ,
        \end{aligned}
  \end{equation*}
  where 
  $$\| B_{12} \|_2 = \| (H_1 F^{-1} -I )B^T \|_{H_2, H_1^{-1}}. $$
  By \eqref{eq:B_12}, we complete the proof.
\end{proof}
\chen{
Here we need two sets of inequalities, analogous both to \eqref{eq:CaCbCc} and \eqref{eq:CaCbCcL}:
\begin{equation}
\begin{aligned}
  &\| M_D ^{-1} K \|_H \le C_a^{D_1},\quad  \| (M_D ^{-1} K)^{-1} \|_H \le C_b^{D_1},\\
  &\left\|H(M_D^{-1}K) - (M_D^{-1}K)^T H\right\|_{H, H^{-1}} \le C_c^{D_1}\,\ttt
\end{aligned}
\label{eq:CaCbCcD1}
\end{equation}
and
\begin{equation}
\begin{aligned}
  &\| K M_D^{-1} \|_{H^{-1}} \le C_a^{D_2},\quad  \| (K M_D ^{-1} )^{-1} \|_{H^{-1}} \le C_b^{D_2},\\
  &\left\|H^{-1} (K M_D^{-1}) - (K M_D^{-1})^T H^{-1}\right\|_{H^{-1},  H} \le C_c^{D_2}\,\ttt.
\end{aligned}
\label{eq:CaCbCcD2}
\end{equation}}
The convergence theorem is then given as follows.
\begin{theorem}
  \label{thm:diagpc}
 Given \chennew{ a tolerance \chennewnew{$\epsilon >0$} and a saddle-point system with matrix $K \in 
\mathcal{K}_{\eta,C_1,C_2}$ for some particular values of $\ttt <\frac12, C_1$, and $C_2$, } let $H_1$ and $N$ be the symmetric and skew-symmetric parts, respectively, of $F$, as in \eqref{eq:sym_skewsym}.
 Let $H_2$ be a symmetric positive definite matrix.
Let $H$ be the block-diagonal matrix defined in \eqref{eq:H}.
\chen{Finally, assume we have the  constants and inequalities defined in \eqref{eq:CaCbCcD1} and \eqref{eq:CaCbCcD2}. Then,  if $C_b^{D_1} C_c^{D_1}\,\ttt < 1$ $C_b^{D_2} C_c^{D_2}\,\ttt < 1$  (i.e., \eqref{cond4} holds for both), then for any system that satisfies these requirements with  $\ttt$, $C_1$, $C_2$, $C_a^{D_1}$, $C_a^{D_2}$, $C_b^{D_1}$, $C_b^{D_2}$, $C_c^{D_1}$, and $C_c^{D_2}$ there exists some integer $m\ge 1$ that depends on their values \chennew{and on $\epsilon$} such that preconditioned GMRES with $M_D$ as a right preconditioner  will converge in the $H^{-1}$-norm and preconditioned GMRES with $M_D$ as a left preconditioner  will converge in the $H$-norm  \chennew{to $\epsilon$} in no more than $m$ iterations. }
\end{theorem}

\subsection{Inexact Preconditioning}
To make the iterations practical, one needs to consider computationally inexpensive ways of approximately inverting the  preconditioners that we have discussed so far, and using those approximate linear operators as the actual preconditioners. Under mild conditions, our analysis seems to carry over to such situations. We illustrate this for a block upper-triangular preconditioner that approximates  the leading block. Consider
$$
\tilde{M}_U = \begin{bmatrix}
  P_1 & B^T \\ 
  0 & H_2
\end{bmatrix},
$$
where the action of (implicitly) inverting $P_1$ is computationally practical.
Note that
$$
\tilde{M}_U ^{-1} K = (\tilde{M}_U^{-1} M_U) M_U^{-1}K
$$
and 
$$
\tilde{M}_U^{-1} M_U = \begin{bmatrix}
 P_1^{-1} F & 0\\
 0 & I
\end{bmatrix}.
$$
\begin{assumption}
    We assume $\|P_1^{-1} F - I\|_{H_1} \leq C_3 \ttt$ and $\| F^{-1} P_1\|_{H_1} \leq C_4$.
    \label{assum:P1}
\end{assumption}
\noindent Based on Assumption~\ref{assum:P1}, we have
$$
\| \tilde{M}_U^{-1} M_U\|_{H} \leq (1 + C_3 \ttt) + 1 \lesssim 1
$$
and 
$$
\| (\tilde{M}_U^{-1} M_U)^{-1}\|_{H} \leq \| F^{-1} P_1\|_{H_1} +  1 \lesssim 1.
$$
We now examine the sufficient conditions.
For condition \eqref{cond1}, we have 
$$
\| \tilde{M}_U^{-1} K\|_{H} \leq \| \tilde{M}_U^{-1} M_U\|_{H} \| M_U^{-1} K\|_{H} \leq (1 + C_3 \ttt) \| M_U^{-1} K\|_H\lesssim 1.
$$
For condition \eqref{cond2}, we have 
$$
\| (\tilde{M}_U^{-1} K)^{-1}\|_{H} \leq \| (\tilde{M}_U^{-1} M_U)^{-1}\|_{H} \| (M_U^{-1} K)^{-1}\|_{H} \lesssim 1.
$$
For condition \eqref{cond3}, we have 
\begin{align*}
\| H(\tilde{M}_U^{-1} K) - (\tilde{M}_U^{-1} K)^T H\|_{H, H^{-1}} &\leq \| H(M_U^{-1} K) - (M_U^{-1} K)^T H\|_{H, H^{-1}}  \\&+ \| H(\tilde{M}_U^{-1}M_U - I) M_U^{-1}K - (M_U ^{-1}K)^T (\tilde{M}_U^{-1}M_U - I)^T H\|_{H, H^{-1}} \\
&\lesssim \ttt +2 \| P_1 ^{-1} F - I\|_{H_1} \|M_U^{-1} K \|_{H}\\
&\lesssim \ttt.
\end{align*}
Thus, if $\ttt$ is small enough,  \eqref{cond4} is satisfied and the iterative solver with $\tilde{M}_U$ as a preconditioner will converge in a fixed number of iterations.

\section{Numerical Experiments} 
\label{sec:experiments}
We provide a couple of examples of applications from fluid dynamics to validate our analysis. 
\subsection{Navier-Stokes Equations}
\label{sec:ns}
Let $\Omega \subset \mathbb{R}^2$ be a bounded domain. 
The Navier-Stokes equations with pure Dirichlet boundary conditions are 
\begin{align*}
  -\nu \Delta \uu + (\uu \cdot \nabla) \uu + \nabla p = \mathbf{f} \quad \text{in } \Omega, \\
  \nabla \cdot \uu = 0 \quad \text{in } \Omega, \\
  \uu = {\uu}_d \quad \text{on } \partial \Omega,
\end{align*}
where $\nu$ is a viscosity coefficient,  $\uu$ is the velocity, $p$ is the pressure, and $\uu=\uu_d$ provides the Dirichlet boundary conditions. 
\chen{For an in-depth description of the finite element solution of the Navier-Stokes equations, see, for example, 
  \cite{elman2005finite,girault2012finite}.}
  
Linearizing the equations using the Picard iteration, we obtain 
\begin{align*}
  -\nu \Delta \uu + (\mathbf{b} \cdot \nabla) \uu + \nabla p = \mathbf{f} \quad \text{in } \Omega,\\
  \nabla \cdot \uu = 0 \quad \text{in } \Omega, \\
  \uu = {\uu}_d \quad \text{on } \partial \Omega,
\end{align*}
where $\mathbf{b}$ is the velocity from the previous iteration.

For simplicity, we assume 
$\uu_d={\mathbf 0}.$
Define the Sobolev spaces
\begin{equation*}
  \mathbf{V} = \{\mathbf{v} \in  (H^1(\Omega))^2: \mathbf{v}=0 \text{ on } \partial \Omega\}\, , \quad Q = \{ q\in L^2(\Omega): \int_{\Omega} q = 0 \} .
\end{equation*}
The weak form involves solving the following system:
find $\uu \in \mathbf{V}$ and $p \in Q$ such that
\begin{equation*}
  \begin{aligned}
  a(\uu,\vv) + b(\vv,p) &= f(\vv) \quad \forall \vv \in \mathbf{V}, \\
  b(\uu,q) &= 0 \quad \forall q \in Q,
  \end{aligned}
\end{equation*}
where the bilinear forms are defined as
\begin{equation*}
  \begin{aligned}
  a(\uu,\vv) &= \nu \int_{\Omega} \nabla \uu \cdot \nabla \vv + \int_{\Omega} (\mathbf{b} \cdot \nabla \uu) \cdot \vv, \\
  b(\uu,q) &= -\int_{\Omega} (\nabla \cdot \uu)q,
  \end{aligned}
\end{equation*}
and $f({\mathbf v})=\int_{\Omega} \mathbf{f} \cdot \vv.$

By using conforming finite element spaces ${\mathbf V}_h \subset {\mathbf V}$ and $Q_h \subset Q$, we discretize these equations and obtain the nonsymmetric saddle-point system
\begin{equation}
  \begin{aligned}
  \begin{bmatrix}
    F & B^T \\
    B & 0
  \end{bmatrix}
  \begin{bmatrix}
    \uu \\
    \pp
  \end{bmatrix}
  =
  \begin{bmatrix}
    \mathbf{f} \\
    \mathbf{0}
  \end{bmatrix},
  \end{aligned}
\end{equation}
where $F  = \nu H_1 + N$, $a(\uu_h,\vv_h) = (F \uu_1,\vv_1) =\nu (H_1 \uu_1,  \vv_1 ) + (N \uu_1, \vv_1) $ , $(\nabla \uu_h, \nabla \vv_h) = (H_1 \uu_1, \vv_1)$, and $((\mathbf{b} \cdot \nabla \uu_h), \vv_h) = (N \uu_1, \vv_1)$. So far, this is a standard treatment of these equations; see~\cite{elman2005finite}.

To make our analysis  applicable, we scale the system on the left by 
$\begin{bmatrix}
  \frac{1}{\nu} & 0 \\
  0 & 1
\end{bmatrix}$ and on the right by 
$\begin{bmatrix}
  1 & 0 \\
  0 & \nu
\end{bmatrix}$, respectively, and the system becomes 
$$
\begin{bmatrix} 
  H_1 + \frac{1}{\nu} N & B^T \\
  B & 0
\end{bmatrix}
\begin{bmatrix} 
  \uu \\
 \frac{1}{\nu} \pp
\end{bmatrix}
=
\begin{bmatrix} 
 \frac{1}{\nu} \mathbf{f} \\
  \mathbf{0}
\end{bmatrix}.
$$

Since the problem is given with pure Dirichlet boundary conditions, we have $N^T = -N$, which indicates $H_1$ is the symmetric part of $H_1+\frac{1}{\nu}N$. If $\nu$ is sufficiently large, then conditions \eqref{req:1} and \eqref{eq:B} are satisfied, as we have used conforming elements.

\begin{remark}
\chennew{
The skew-symmetric part is small in norm by Assumption~\ref{asmp1}, which holds if $\nu$ is large, but the saddle-point matrix is  nonnormal and performing convergence analysis for GMRES is challenging. An  analysis based on eigenvalue and eigenvector conditioning may be possible, based on observations and insights such as those in \cite[Corollary 2.2 and Section 4.1]{embree2022descriptive} or \cite{GreenbaumStrakos1994Krylov}, although we are not aware of a comprehensive analysis of this type for the Navier-Stokes equations.  The field-of-values analysis  is an alternative approach, applicable under the assumptions made.  }
\label{rem:smallskew}
\end{remark}

\chen{
Our setting can be extended to mixed boundary conditions. The bilinear form  
can be writen as
$$
a_{\rm mix}(\uu,\vv) =  \nu \int_{\Omega} \nabla \uu \cdot \nabla \vv + \frac{1}{2}\int_{\Gamma_n}(\mathbf{b}\cdot \mathbf{n}) \uu\cdot \vv+ \int_{\Omega} (\mathbf{b} \cdot \nabla \uu) \cdot \vv - \frac{1}{2}\int_{\Gamma_n}(\mathbf{b}\cdot \mathbf{n}) \uu\cdot \vv.
$$
\chennew{Here, $\Gamma_n$ is the boundary with Robin conditions $\uu \cdot \mathbf{n} = g$ and $\mathbf{n}$ is a vector tangent to the boundary. }
Let us denote the leading block in the saddle-point system in this case by $\tilde{F} = \nu \tilde{H}_1 + \tilde{N}$. Then $$a_{\rm mix}(\uu_h,\vv_h)=(\tilde{F} \uu_1,\vv_1)=\nu(\tilde{H}_1 \uu_1, \vv_1) + (\tilde{N}\uu_1,\vv_1),$$ where $$(\tilde{H}_1 \uu_1,\vv_1) =   \int_{\Omega} \nabla \uu_1 \cdot \nabla \vv_1 + \frac{1}{2\nu}\int_{\Gamma_n}(\mathbf{b}\cdot \mathbf{n}) \uu_1\cdot \vv_1$$ and  $$\int_{\Omega} (\mathbf{b} \cdot \nabla \uu_1) \cdot \vv_1 - \frac{1}{2}\int_{\Gamma_n}(\mathbf{b}\cdot \mathbf{n}) \uu_1\cdot \vv_1 =(\tilde{N}\uu_1,\vv_1). 
$$ 
Note that if $\nu$ is large enough, 
$\tilde{H}_1$ is spectrally equivalent to $H_1$, thus the mixed boundary conditions give us similar results to the setting with Dirichlet boundary conditions.}

We numerically solve the regularized lid-driven cavity problem using IFISS \cite{ifiss} to illustrate our results. The domain $\Omega$ is chosen as $[-1,1]^2$. Zero boundary conditions are imposed, except we take $u_x = 1-x^4$ on $\{y=1,-1\leq x\leq 1\}$.

 We set $\nu=1$, because our analysis requires it to be relatively large, and apply the Picard iteration, \chennew{using the IFISS default nonlinear tolerance}. Since $\nu$ is relatively large, the nonlinear iterations converge quickly; we record average iteration counts and examine the performance of the linear solvers. We note that we have observed no significant differences among the linear solver iteration counts throughout the nonlinear iteration. We use the diagonal preconditioner $M_D$ defined in \eqref{eq:MD} and the upper triangular preconditioner $M_U$ defined in \eqref{eq:MU}.  We use left preconditioning for both;  the results for right preconditioning with $M_L$ defined in \eqref{eq:ML} are virtually the same. 
 
 Results for a few mesh sizes can be found in Table \ref{tab:ns}. We observe an excellent level of scalability: the iteration counts are nearly constant for various mesh sizes in all cases. We present our iteration counts in both the $\ell_2$ and $H$ norms, and observe that they are nearly identical. 
 
\begin{table}[htbp]
\centering
\begin{tabular}{|c|cc|cc|}
  \hline
  System Size & \multicolumn{2}{c|}{$\ell_2$-norm} & \multicolumn{2}{c|}{$H$-norm} \\
  \cline{2-5}
   & Diagonal & Upper Triangular & Diagonal & Upper Triangular \\
  \hline
  210 & 21.0 & 11.0 & 21.0 & 11.3 \\
  770 & 22.5 & 12.0 & 23.0 & 12.0 \\
  2,946 & 23.0 & 12.5 & 23.0 & 12.5 \\
  11,522 & 24.0 & 13.0 & 24.0 & 13.0 \\
  \hline
  \end{tabular}
\caption{Average iteration counts for Navier-Stokes. \chennew{For each Picard iteration, the inner linear system solve was terminated when a relative residual tolerance of $\frac{\| r_k\| }{ \| r_0 \|} < 10^{-5}$ was reached,  where the norms used were the ones corresponding to results reported in the table: $\ell_2$-norm on the left and $H$-norm on the right.}}
\label{tab:ns}
\end{table}

For the diagonal preconditioner, we have computed the parameters of \chen{Lemma}~\ref{lem:fov_main} \chen{in the $H$-norm} and have observed that $b \approx 2.25$ and $c\approx 0.016$. For the upper-triangular preconditioner, $b\approx 2.06$ and $c\approx 0.035$. In both cases we have $bc<1$, as required.

\subsection{Stokes-Darcy Equations}
\label{sec:sd}
Consider the  Stokes-Darcy equations on a non-overlapping domain $\Omega = \Omega_s \cup \Omega_d$ with a polygonal interface $\Gamma_I = \partial \Omega_s \cap \partial \Omega_d$: 
\begin{equation*} 
    \begin{aligned}
    -\nabla \cdot(2\nu D(\mathbf{u})-p\mathbf{I}) &= \mathbf{f}^s & \text{ in } \partial \Omega_s,\\
    \nabla \cdot \mathbf{u} &= 0 & \text{ in } \partial \Omega_s,\\
    \mathbf{u} &= \mathbf{g}^{s} & \text{ on } \Gamma_s = \partial\Omega_s \cap\partial\Omega ,  \\
     -k\Delta \phi &= f^d & \text{ in } \Omega_d,\\
    \phi &= g^d & \text{ on } \Gamma_d,\\
    k \nabla \phi \cdot \mathbf{n} &= g^n & \text{ on } \Gamma_n,  \\
    \mathbf{u}\cdot \mathbf{n}_{12} &= -k\nabla \phi \cdot \mathbf{n}_{12} & \text{ on } \Gamma_{I}, \\
    (-2\nu D(\mathbf{u}) \cdot \mathbf{n}_{12} + p \mathbf{n}_{12})\cdot \mathbf{n}_{12} &= \phi & \text{ on } \Gamma_{I}, \\
    \mathbf{u} \cdot \boldsymbol{\tau}_{12} &= -2\nu G(D(\mathbf{u})\mathbf{n}_{12})\cdot \boldsymbol{\tau}_{12} & \text{ on } \Gamma_{I},
    \end{aligned}
\end{equation*}
where $\mathbf{u}$ satisfies the incompressibility condition $\nabla \cdot \mathbf{u} = 0$. Here, $\Omega_S$ and $\Omega_d$ are assumed to be simple domains, e.g., the unit squares in two dimensions, with a polygonal interface. The operator $D$ is defined as $D(\uu)=\frac12 (\nabla \uu+\nabla\uu^T).$ The physical parameters $\nu$ and $k$ denote the viscosity coefficient and hydraulic constant, respectively. The constant $G$ represents an experimentally-determined constant related to the Beavers-Joseph-Saffman interface condition. Finally, $\mathbf{n}_{12}$ and $\chen{\boldsymbol{\tau}_{12}}$ are  unit normal and  tangential vectors; see \cite{chidyagwai2016constraint} for details.

We use the finite element discretization described in \cite{chidyagwai2016constraint,discacciati2009navier, chidyagwai2010numerical}; some details on the Stokes part are similar to Section \ref{sec:ns}. We note that there are several other distinct possibilities here for different discretizations; see, e.g., \cite{greif2023block}.  Full details on the discretization of the entire problem are omitted since this is not the focus of our paper. The discretization yields the following linear system:
\begin{equation}\label{eq:linear-system}
  \mathcal{K}\begin{bmatrix}
    \mathbf{u}_1\\
    \phi_1\\
    p_1
  \end{bmatrix}=
  \begin{bmatrix}
    \nu \aas & I_{12}^T & B^T\\
    -I_{12} & k A_{\Omega_d} & 0\\
    B & 0 & 0
  \end{bmatrix}
  \begin{bmatrix}
    \mathbf{u}_1\\
    \phi_1\\
    p_1
  \end{bmatrix}
  =
  \begin{bmatrix}
    \mathbf{f}\\
    f\\
    0
  \end{bmatrix},
\end{equation}
where $\uu_1$, $\phi_1$ and $p_1$ represent the vectors of coefficients in the finite element basis expansions, with corresponding continuous finite element solutions denoted by $\uu_h$, $p_h$ and $\phi_h$, respectively. 

For simplicity of our analysis, we assume $k = \nu $,
and consider the following scaled matrix:
\begin{equation*}
  \begin{bmatrix}
    \aas & \frac{1}{\nu} I_{12}^T & B^T\\
    -\frac{1}{\nu}I_{12} &  A_{\Omega_d} & 0\\
    B & 0 & 0
  \end{bmatrix}.
\end{equation*}
Then, assuming that $\nu$ is sufficiently large (which corresponds to requiring $\ttt$ to be sufficiently small in our analysis in Section~\ref{sec:fov_sp}; see \eqref{req:1}), let us define $$F = \begin{bmatrix}
  \aas & 0 \\
  0 & A_{\Omega_d}
\end{bmatrix} + \frac{1}{\nu}\begin{bmatrix}
  0 & I_{12}^T \\
  -I_{12} & 0
\end{bmatrix}.$$ It has been shown in the literature \cite{chidyagwai2016constraint,discacciati2009navier} that the inf-sup condition for the matrix $\begin{bmatrix}
B & 0 
\end{bmatrix}$ is satisfied and  that the skew-symmetric operator $\begin{bmatrix}
  0 & I_{12}^T \\
  -I_{12} & 0
\end{bmatrix}$ is bounded. Therefore, the conditions of Lemma \ref{lem:fov_main} are satisfied, and  it follows that an iterative solver preconditioned with the block preconditioners discussed in Section~\ref{sec:fov_sp} will converge independently of the mesh size.

We use the following example from \cite{chidyagwai2016constraint}.
We choose $\Omega_s$ to be $[0,1]^2$ and $\Omega_d$ to be $[0,1]\times [1,2]$. $\Gamma_n$ is $\{x=0,y \in [0,1]\}\cup\{x=1,y \in [0,1]\}$. Boundary conditions and right-hand side are computed from the following exact solution:
\begin{align*}
    \uu(x,y) &= [y^2-2y+1+\nu(2x-1), x^2 - x - 2\nu(y-1)]^T; \\
    p(x,y) &= 2\nu (x+y-1)+\frac{1}{3k} - 4\nu^2; \\
    \psi(x,y) &= \frac{1}{k}(x(1-x)(y-1) + \frac{y^3}{3} - y^2 + y) + 2\nu x.
\end{align*}
We also set $k=\nu=3$ and $G=1$, in order for the parameters to satisfy the conditions of Lemma~\ref{lem:fov_main}.

As we have done for the Navier-Stokes problem in Section \ref{sec:ns} -- here, too, we provide a brief validation of our analysis. We again apply left preconditioning, using the diagonal and the upper-triangular preconditioners, $M_D$ and $M_U$ respectively, defined in \eqref{eq:MD} and \eqref{eq:MU}.

 Our observations are similar to those we made in Section \ref{sec:ns}. The results for a few mesh sizes can be found in Table \ref{tab:NSD}. We again observe an excellent level of scalability, with iteration counts nearly constant for various mesh sizes in all cases. The iteration counts in the $\ell_2$ and $H$ norms are nearly identical.

\begin{table}[htbp]
\centering

\begin{tabular}{|c|cc|cc|}
  \hline
  System Size & \multicolumn{2}{c|}{$\ell_2$-norm} & \multicolumn{2}{c|}{$H$-norm} \\
  \cline{2-5}
   & Diagonal & Upper Triangular & Diagonal & Upper Triangular \\
  \hline
  633 & 27 & 16 & 29 & 16 \\
  2,545 & 28 & 16 & 30 & 16 \\
  10,209 & 28 & 16 & 30 & 16 \\
  40,897 & 30 & 16 & 28 & 16 \\
  \hline
  \end{tabular}
\caption{Iteration counts for the Stokes-Darcy equations.  \chennew{The inner linear system solve was terminated when a relative residual tolerance of $\frac{\| r_k\| }{ \| r_0 \|} < 10^{-5}$ was reached,  where the norms used were the ones corresponding to results reported in the table: $\ell_2$-norm on the left and $H$-norm on the right.}}
\label{tab:NSD}
\end{table}
For the diagonal preconditioner, we have observed experimentally for the smaller-size problems that the parameters of Lemma \ref{lem:fov_main} satisfy $b \approx 9.18$, $c \approx 0.08$, and $bc<1$ \chen{in the $H$-norm}.
For the upper triangular preconditioner, $b \approx 8.28$, $c \approx 0.11$ and $bc<1$.

\section{Concluding Remarks}
\label{sec:conclusions}

Our analysis broadens the range of preconditioned saddle-point systems for which FOV analysis may be applied by including cases where zero is included in the field of values. This includes the important family of block-diagonal preconditioners, as well as upper-triangular preconditioners applied with right preconditioning. For these cases, to our knowledge, no FOV analysis was previously available when \eqref{eq:MAxx} is true.  

When applying Theorem~\ref{thm:fieldofvalues}, 
a disk must be excluded from the field of values, and the remaining part should not surround the origin, as we have illustrated in Figure~\ref{fig:aclt1}. \chen{To accomplish this, we require the imaginary part of the FOV to be small enough, which means that for the nonsymmetric saddle-point systems we consider,  the skew-symmetric part of \chennew{the} preconditioned operator needs to be small in norm. }

A finer geometric study of the field of values, beyond bounding it just by using the imaginary axis, may allow for broadening the scope of problems for which our analysis is applicable, including additional types nonsymmetric saddle-point linear systems.

\section*{Acknowledgments}\noindent We are very grateful to two  knowledgable referees whose  thorough and helpful reviews  have greatly  improved the quality of this paper.

\bibliographystyle{elsarticle-num}
\bibliography{references.bib}

\end{document}